\documentclass[12pt]{amsart}
\usepackage{amsmath,amsthm,amsfonts,amssymb,eucal}

\renewcommand {\a}{ \alpha }

\newcommand{\y}{\eta}

\newcommand{\vare}{\varepsilon}

\newcommand{\G}{\Gamma}

\newcommand{\varf}{\varphi}
\renewcommand{\d}{\delta}

\newcommand{\D}{\Delta}
\newcommand{\s}{\sigma}
\newcommand{\Sg}{\Sigma}
\renewcommand{\l}{\lambda}

\renewcommand{\t}{\tau}

\newcommand{\om}{\omega}
\newcommand{\Om}{\Omega}
\newcommand{\x}{\xi}
\newcommand{\R}{ \mathbb R}

\newcommand{\N}{ \mathbb N}
\newcommand{\Tor}{ \mathbb T}
\newcommand{\Z}{ \mathbb Z}

\newcommand{\CL}{\mathcal L}

\newcommand{\CC}{\mathcal C}

\newcommand{\CF}{\mathcal F}

\newcommand{\CH}{\mathcal H}
\newcommand{\CI}{\mathcal I}
\newcommand{\CJ}{\mathcal J}

\newcommand{\CP}{\mathcal P}

\newcommand{\CN}{\mathcal N}

\newcommand {\GH}{\mathfrak H}

\newcommand {\GS}{\mathfrak S}

\newcommand {\bk}{\mathbf k}

\newcommand {\bb}{\mathbf b}

\newcommand {\BB}{\mathbf B}

\newcommand {\BF}{\mathbf F}

\newcommand {\BH}{\mathbf H}
\newcommand {\BI}{\mathbf I}

\newcommand {\BM}{\mathbf M}
\newcommand {\BN}{\mathbf N}

\newcommand {\BS}{\mathbf S}
\newcommand {\BT}{\mathbf T}
\newcommand {\BV}{\mathbf V}
\newcommand {\BW}{\mathbf W}

\newcommand{\wt}{\widetilde}
\newcommand{\wh}{\widehat}

 \DeclareMathOperator{\dist}{dist}

\DeclareMathOperator{\ess}{ess}

\DeclareMathOperator{\rank}{rank}

\newtheorem{thm}{Theorem}[section]
\newtheorem{cor}[thm]{Corollary}
\newtheorem{lem}[thm]{Lemma}
\newtheorem{prop}[thm]{Proposition}

\theoremstyle{definition}

\theoremstyle{remark}

\numberwithin{equation}{section}

\newcommand{\thmref}[1]{Theorem~\ref{#1}}

\newcommand{\bsymb}{\boldsymbol}

\newcommand{\sh}{Schr\"odinger }
\newcommand{\vs}{\vskip0.2cm}

\begin{document}

 \title[Schr\"odinger operator on $\Z^d$]{On the spectral estimates for the
Schr\"odinger operator on $\Z^d,\ d\ge3$}
\author[Rozenblum]{Grigori Rozenblum}
\address[G. Rozenblum]{1) Department of Mathematical Sciences,                       Chalmers University of Technology, Sweden; 2) Department of Mathematical Sciences, University of Gothenburg, Sweden}
\email{grigori@chalmers.se}
\author[Solomyak]{Michael Solomyak}
\address[M. Solomyak]{Department of Mathematics, The Weizmann Institute of Science, Rehovot, Israel}
\email{michail.solomyak@weizmann.ac.il}
\begin{abstract}For the discrete Schr\"odinger operator we obtain sharp estimates for the number of negative eigenvalues.
\end{abstract}
\maketitle
\section{Introduction}\label{intro}
We study the estimates of the number of negative eigenvalues of
the discrete Schr\"odinger operator
\begin{equation}\label{0:1}
    \BH_{\a V}=-\D-\a V
\end{equation}
in the Hilbert space $\ell_2(\Z^d),\ d\ge 3$. We use the standard
notation for the lattice points: $x=(x_1,\ldots,x_d)$, with
$x_j\in\Z$ for each $j$. The discrete Laplacian is
\begin{equation*}
    (\D u)(x)=\sum_{j=1}^d(u(x+1_j)+u(x-1_j)-2u(x)), \qquad x\in \Z^d,
\end{equation*}
 where $1_j$ is the multi-index with all zero entries except $1$ in the
position $j$. The Laplacian $\D$ is a bounded operator, and the
spectrum of $-\D$ is absolutely continuous and coincides with
$[0,2d]$. The corresponding quadratic form is
\begin{equation}\label{0:3}
    Q_0[u]=\sum_{x\in\Z^d}\sum_{j=1}^d|u(x+1_j)-u(x)|^2.
\end{equation}
Being considered on the set $\CF$ of sequences $u$ with finite
support, the quadratic form $Q_0$ is non-degenerate. For $u\in\CF$
the Hardy inequality is fulfilled,
\begin{equation}\label{0:Hardy} Q_0[u]\ge
c_d\sum_{x\in\Z^d}\frac{|u(x)|^2}{|x|^2+1},\qquad d\ge
3;\end{equation} see Section \ref{hardy1} where we discuss this
inequality and its generalizations. It follows from
\eqref{0:Hardy} that the completion of $\CF$ in the metric
generated by the quadratic form $Q_0$ is some Hilbert space of
number sequences. We denote it by $\CH^1=\CH^1(\Z^d)$. The symbol
$V$ in \eqref{0:1} stands for the discrete potential $V=V(x)\ge
0$. We usually assume that $V(x)\to 0$ as $|x|\to\infty$; then the
operator of multiplication by $V$ is compact in $\ell_2(\Z^d)$.
Finally, $\a\ge 0$ in \eqref{0:1} is a large parameter (the
coupling constant). \vs
 If $V\to 0$ at infinity
then for any $\a>0$ the essential spectrum $\s_{\ess}(\BH_{\a V})$
is the same as for $\a=0$, i.e., it is $[0,2d]$. The negative
spectrum consists of a finite or countable set of eigenvalues,
each of a finite multiplicity, with the only possible accumulation
point at $\l=0$. We denote by $N_-(\BH_{\a V})$ the number of
negative eigenvalues, counted with their multiplicities. If $V$
has infinite support, then $N_-(\BH_{\a V})\to\infty$ as $\a$
grows. For finitely supported $V$ one obviously has
\begin{equation}\label{0:5}
    N_-(\BH_{\a V})\le \#\{x\in\Z^d: V(x)\neq 0\},\qquad \forall \a>0.
\end{equation}
Our main goal is to find order-sharp estimates for $N_-(\BH_{\a
V})$, depending on the properties of the potential $V$ and on the
value of $\a$. \vs

A similar problem for the `continuous' Hamiltonian is well
studied, see e.g. the survey paper \cite{RS} and references therein.
Here we recall the basic results for the latter problem, since the
comparison of these two problems will be one of our main
concerns.\vs

The most important result for the continuous case is given by the
Rozenblum -- Lieb -- Cwikel inequality. We present its most
complete formulation, see Theorem 2.1 in \cite{RS}. This
formulation goes back to the lectures \cite{BSlect}, see Theorems
4.14, 4.15, and 4.17 there. For the Schr\"odinger operator on
$\R^d$ we use the same notation \eqref{0:1} as for its discrete
counterpart; the meaning of all terms in the formulas
\eqref{1:rlc} and \eqref{0:W} below should be clear from the
context.
\begin{thm}\label{RLC} Let $d\ge 3$. Then there exists a
constant $C_{1.5}=C_{1.5}(d)$ such that for any $ V\in
L_\frac{d}2(\R^d),\ V\ge0$,
\begin{equation}\label{1:rlc}
    N_-(\BH_{\a V})\le C_{1.5}\a^\frac{d}2\int_{\R^d}V^\frac{d}2dx,
\end{equation}
and moreover, the Weyl asymptotic formula holds:
\begin{equation}\label{0:W}
    N_-(\BH_{\a V})\sim
    \omega_d\a^\frac{d}2\int_{\R^d}V^\frac{d}2dx,\qquad\a\to\infty.
\end{equation}
Conversely, suppose that $d\ge3$, for a certain $V\ge 0$ the
operator $\BH_{\a V}$ is well defined (via its quadratic form) and
bounded below for all $\a>0$, and $N_-(\BH_{\a
V})=O(\a^\frac{d}2)$ as $\a\to\infty$. Then $V\in
L_\frac{d}2(\R^d)$, and, therefore, estimate \eqref{1:rlc} and
asymptotic formula \eqref{0:W} are fulfilled.
\end{thm}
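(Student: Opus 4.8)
The plan is to prove the three assertions in turn: the Birman--Schwinger principle reduces the eigenvalue count to a spectral problem for a compact operator; the Cwikel estimate in a weak trace ideal yields the upper bound \eqref{1:rlc}; a semiclassical approximation argument produces the asymptotics \eqref{0:W}; and a monotonicity comparison settles the converse. For the first step I would invoke the Birman--Schwinger principle. Since the Green function of $-\D$ on $\R^d$, $d\ge3$, is $G(x,y)=c_d|x-y|^{-(d-2)}$, the number of negative eigenvalues of $\BH_{\a V}$ equals the number $n_+(1,\BK_\a)$ of eigenvalues exceeding $1$ of the Birman--Schwinger operator $\BK_\a=\a\,V^{1/2}(-\D)^{-1}V^{1/2}$. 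Factoring $\BK_\a=\a\,T^*T$ with $T=(-\D)^{-1/2}V^{1/2}=g(-i\nabla)f(X)$, where $g(\x)=|\x|^{-1}$ and $f=V^{1/2}$, the Cwikel estimate for operators $f(X)g(-i\nabla)$ gives $T\in\GS_{d,\infty}$ with
\[
\|T\|_{\GS_{d,\infty}}\le C\,\bigl\|\,|\x|^{-1}\bigr\|_{L_{d,\infty}}\,\|f\|_{L_d}=C'\,\|V\|_{L_{d/2}}^{1/2},
\]
valid precisely because $d>2$. Hence $\BK_\a\in\GS_{d/2,\infty}$ with quasinorm $O(\a\,\|V\|_{d/2})$, and the elementary bound $n_+(1,\BK_\a)\le\|\BK_\a\|_{\GS_{d/2,\infty}}^{d/2}$ together with $\|V\|_{d/2}^{d/2}=\int V^{d/2}dx$ yields \eqref{1:rlc}.

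For the asymptotics \eqref{0:W} I would first establish it for a dense class of ``nice'' potentials --- bounded $V$ of compact support, or finite sums of characteristic functions of cubes --- by Dirichlet--Neumann bracketing and a direct semiclassical computation, which fixes the phase-volume constant $\om_d=(2\pi)^{-d}\meas\{\x:|\x|<1\}$. The passage to a general $V\in L_{d/2}$ is then the Birman--Solomyak approximation scheme: given $\e>0$, choose a nice $W$ with $\|V-W\|_{d/2}<\e$, write $V=W+(V-W)$, and use the variational two-sided estimate together with the already-proved inequality \eqref{1:rlc} applied to the remainder to show that the contribution of $V-W$ to $\a^{-d/2}N_-(\BH_{\a V})$ is $O(\e^{d/2})$ uniformly in $\a$. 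Letting $\a\to\infty$ and then $\e\to0$ gives \eqref{0:W}.

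The converse I would handle last, as it follows almost formally once \eqref{0:W} is available for nice potentials and two structural facts are in place. The first is monotonicity: if $0\le W\le V$ pointwise, then $Q_0[u]-\a\int W|u|^2dx\ge Q_0[u]-\a\int V|u|^2dx$, so by the variational principle $N_-(\BH_{\a V})\ge N_-(\BH_{\a W})$. The second is that any $V\ge0$ is the monotone limit of nice potentials $W_n\le V$ (truncate in height and in support). Assuming $N_-(\BH_{\a V})\le C\a^{d/2}$ for all $\a$, fix such a $W_n$; since $W_n\in L_{d/2}$, the asymptotics \eqref{0:W} applies to it, and monotonicity gives
\[
C\a^{d/2}\ge N_-(\BH_{\a V})\ge N_-(\BH_{\a W_n})\sim\om_d\,\a^{d/2}\int_{\R^d}W_n^{d/2}dx.
\]
Dividing by $\a^{d/2}$ and letting $\a\to\infty$ yields $\int W_n^{d/2}dx\le C/\om_d$ for every $n$; monotone convergence then gives $\int_{\R^d}V^{d/2}dx\le C/\om_d<\infty$, that is, $V\in L_{d/2}$, whence \eqref{1:rlc} and \eqref{0:W} hold.

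The main obstacle is the Cwikel estimate of the first step: proving that $f(X)g(-i\nabla)$ lies in the weak trace ideal $\GS_{d,\infty}$ with the stated quasinorm bound. This is exactly where the restriction $d\ge3$ enters, since for $d=2$ the symbol $|\x|^{-1}$ fails to belong to weak $L_d$, and it is the technical heart of the whole inequality; the remaining ingredients --- the Birman--Schwinger reduction, the bracketing computation of $\om_d$, and the comparison argument for the converse --- are essentially bookkeeping around it. An alternative route avoiding weak trace ideals would replace this step by Lieb's heat-kernel bound on $\mathrm{Tr}\,e^{-t\BH_{\a V}}$ via the Feynman--Kac formula, in which case the main obstacle becomes instead the diamagnetic path-integral estimate for the perturbed semigroup.
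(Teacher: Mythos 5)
The paper does not prove this theorem at all: Theorem~\ref{RLC} is quoted as a known result, with references to Theorem 2.1 of \cite{RS} and Theorems 4.14, 4.15, 4.17 of \cite{BSlect}, so there is no internal proof to compare against. Your proposal is a reasonable reconstruction of the standard argument from that literature: Birman--Schwinger reduction, Cwikel's estimate for $f(X)g(-i\nabla)$ in the weak ideal $\GS_{d,\infty}$, bracketing plus the Birman--Solomyak approximation scheme for the Weyl asymptotics, and monotonicity plus truncation for the converse. Two points deserve correction or tightening. First, your stated reason for the restriction $d\ge3$ is wrong: $|\x|^{-1}$ \emph{does} belong to $L_{2,w}(\R^2)$ (the set $\{|\x|<1/t\}$ has measure $\pi t^{-2}$); what actually fails in $d=2$ is Cwikel's theorem itself at the endpoint exponent $p=2$ --- and, more fundamentally, the CLR bound is false in two dimensions since any nontrivial attractive potential produces a bound state. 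Second, in the approximation step for \eqref{0:W} the remainder $V-W$ need not be nonnegative, and the clean way to pass to the limit is not a direct ``$O(\e^{d/2})$ uniformly in $\a$'' bookkeeping but the continuity of the functionals $\D_{d/2}$ and $\d_{d/2}$ on the space $\Sg_{d/2}$ (the continuous analogue of Proposition~\ref{lims} in this paper), applied to the Birman--Schwinger operators; your sketch gestures at this but should invoke it explicitly, together with the splitting $N_-(\BH_{\a(V_1+V_2)})\le N_-(\BH_{2\a V_1})+N_-(\BH_{2\a V_2})$ applied to $(V-W)_+$. With those repairs the argument is the standard one and is correct.
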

\vs We immediately conclude from \thmref{RLC} that in the
continuous case the behavior $N_-(\BH_{\a V})=o(\a^\frac{d}2)$ is
impossible, unless $V\equiv0$. The growth faster than $N_-(\BH_{\a
V})=O(\a^\frac{d}2)$ is possible, see the results in \cite{BS,
BS2}; some of them are also presented in \cite{RS}. \vs In the
discrete case an analogue of \eqref{1:rlc} remains valid, but
other statements of \thmref{RLC} fail to be true. Let us formulate
the result.
\begin{thm}\label{RLC-d} Let $d\ge 3$. Then there exists a
constant $C_{1.7}=C_{1.7}(d)$ such that for any $ V\in
\ell_\frac{d}2(\Z^d),\ V\ge0,$
\begin{equation}\label{1:rlc-d}
    N_-(\BH_{\a V})\le
    C_{1.7}\a^\frac{d}2\sum_{x\in\Z^d}V(x)^\frac{d}2.
\end{equation}
At the same time,
\begin{equation}\label{0:W-d}
    N_-(\BH_{\a V})=o(\a^\frac{d}2).
   \end{equation}
\end{thm}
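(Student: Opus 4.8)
The plan is to establish the Cwikel--Lieb--Rozenblum bound \eqref{1:rlc-d} by the heat--semigroup method, and then to deduce the refined statement \eqref{0:W-d} from \eqref{1:rlc-d} by a splitting of the potential. For \eqref{1:rlc-d} I would work through the Birman--Schwinger principle: the Hardy inequality \eqref{0:Hardy} guarantees that the form of $\BH_{\a V}$ is well defined and bounded below on $\CH^1$ and that $V^{1/2}(-\D)^{-1}V^{1/2}$ is compact, so that $N_-(\BH_{\a V})$ equals the number of eigenvalues of $\a V^{1/2}(-\D)^{-1}V^{1/2}$ that exceed $1$. The whole estimate then rests on a single analytic input: a pointwise bound for the diagonal of the heat kernel $e^{t\D}(x,y)$.

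Since $\D=\sum_{j=1}^d\D_j$ is a sum of commuting one--dimensional lattice Laplacians, the kernel factorizes, $e^{t\D}(x,y)=\prod_{j=1}^d e^{t\D_j}(x_j,y_j)$, and the one--dimensional diagonal is the classical expression $e^{t\D_1}(m,m)=e^{-2t}I_0(2t)$ with $I_0$ the modified Bessel function. Two elementary facts suffice: $e^{-2t}I_0(2t)\le 1$ for all $t$ (because $-\D_1\ge0$, so $e^{t\D_1}$ is an $\ell_2$-contraction and its diagonal cannot exceed $\|\delta_0\|^2=1$), and $e^{-2t}I_0(2t)\sim(4\pi t)^{-1/2}$ as $t\to\infty$ by the standard Bessel asymptotics. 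Multiplying over the $d$ coordinates gives the uniform bound
\begin{equation*}
 e^{t\D}(x,x)\le C_d\min(1,t^{-d/2})\le C_d\,t^{-d/2},\qquad t>0,\ x\in\Z^d .
\end{equation*}
Because $e^{t\D}$ is positivity preserving and the same bound controls $\|e^{t\D}\|_{\ell_1\to\ell_\infty}$, the abstract CLR estimate for generators of positivity--preserving semigroups applies with the weight $M(t)=C_d\,t^{-d/2}$; the usual scaling in that machinery converts this diagonal decay into the integrand $W^{d/2}$, yielding $N_-(-\D-W)\le C\sum_x W(x)^{d/2}$ for $W\ge0$. Taking $W=\a V$ gives \eqref{1:rlc-d}.

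For \eqref{0:W-d} I would exploit that $V\in\ell_{d/2}$ forces a vanishing tail. Fix $\e>0$ and split $V=V_1+V_2$ with $V_1=V\,\mathbf{1}_{\{V>\e\}}$ and $V_2=V\,\mathbf{1}_{\{V\le\e\}}$; then $M_\e:=\#\{x:V(x)>\e\}<\infty$, while $\tau(\e):=\sum_{x}V_2(x)^{d/2}\to0$ as $\e\to0$, since $\sum_x V(x)^{d/2}<\infty$. Writing $-\D=-\tfrac12\D-\tfrac12\D$ and using the variational inequality $N_-(A+B)\le N_-(A)+N_-(B)$ with $A=-\tfrac12\D-\a V_1$ and $B=-\tfrac12\D-\a V_2$, I bound the first summand by the trivial finite--rank estimate \eqref{0:5}, namely $N_-(A)\le M_\e$, and the second by \eqref{1:rlc-d} applied with coupling $2\a$:
\begin{equation*}
 N_-(\BH_{\a V})\le M_\e+C_{1.7}\,(2\a)^{d/2}\tau(\e)=M_\e+C'\a^{d/2}\tau(\e).
\end{equation*}
Dividing by $\a^{d/2}$, letting $\a\to\infty$ and then $\e\to0$ gives $\a^{-d/2}N_-(\BH_{\a V})\to0$, which is \eqref{0:W-d}.

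The main obstacle is entirely in the first part: producing the two--sided heat--kernel control and verifying that the positivity--preserving semigroup framework delivers the clean power $d/2$ with the correct summability. Everything distinctive about the discrete problem is encoded in the boundedness $e^{t\D}(x,x)\le1$ at small $t$ (the lattice has no short--distance singularity), and this is exactly what reappears, in the second part, as the $\a$--independent bound $M_\e$ on the singular piece $V_1$; it is the structural reason why the discrete count is $o(\a^{d/2})$ rather than of exact order $\a^{d/2}$.
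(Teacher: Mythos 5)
Your proposal is correct, and both halves land on the same underlying machinery as the paper, though with different packaging. For \eqref{1:rlc-d} the paper verifies the Sobolev-type inequality $Q_0[u]\ge c\bigl(\sum_x|u(x)|^{\sigma}\bigr)^{2/\sigma}$, $\sigma=\frac{2d}{d-2}$ (quoting \cite{VCS}), and feeds it into the CLR theorem for Markov generators of \cite{LS}; you instead verify the equivalent hypothesis of on-diagonal heat-kernel decay $e^{t\D}(x,x)\le C_d\min(1,t^{-d/2})$ via the factorization into one-dimensional lattice Laplacians and the Bessel asymptotics, which is precisely the alternative route through \cite{RS1} that the authors mention without executing. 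Your input is more self-contained and makes visible the structural point the authors only state abstractly, namely that the absence of a short-distance singularity (the bound $e^{t\D}(x,x)\le 1$ for small $t$) is what makes the discrete case special. For \eqref{0:W-d} the paper approximates $V$ by finitely supported potentials, uses \eqref{0:5} and \eqref{rlc-norm}, and invokes the continuity of the functional $\D_{d/2}$ on the ideal $\Sg_{d/2}$ (Proposition~\ref{lims}); your splitting $V=V\mathbf{1}_{\{V>\e\}}+V\mathbf{1}_{\{V\le\e\}}$ together with $-\D=-\tfrac12\D-\tfrac12\D$ and the subadditivity $N_-(A+B)\le N_-(A)+N_-(B)$ is exactly the elementary argument hiding inside that continuity statement (it is the quadratic-form version of the Weyl inequality \eqref{4:Thm.3} that the paper itself uses in Section~\ref{2q<d}), so you trade the operator-ideal formalism for a direct two-line estimate at no loss of rigor. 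Both of your steps are sound as sketched.
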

The estimate \eqref{1:rlc-d} is known, see e.g. \cite{LS,RS}. The
property \eqref{0:W-d} is quite elementary, but, probably, was
observed for the first time in the survey paper \cite{RS}. See
Section \ref{discrRLC} below for a more detailed discussion of
\thmref{RLC-d}.

Comparing these theorems, we readily see not only analogies but
also distinctions between the continuous and the discrete cases.
There are also others facts of a similar nature which we are
going to discuss.\vs

Let us describe the structure of the paper. In the next Section
\ref{prel} we present the necessary auxiliary material, and then
we explain the proof of \thmref{RLC-d}. In Section \ref{2q<d} we
obtain the estimates of the type $N_-(\BH_{\a V})=O(\a^q)$ with
$2q<d$. They have no analogues in the continuous case.

Technically, the case $2q>d$ is more involved than the previous
one. The general multidimensional discrete Hardy type inequalities
are an important ingredient of our approach. They are known in
`mathematical folklore', but we could not find any exposition of
this material in the literature. To fill this gap, we give such an
exposition in Section \ref{hardy1}. { We describe a wide class of
discrete Hardy inequalities that can be derived directly from
their classical analogues for $\R^d$.}

Section \ref{2q>d} is devoted to the estimates $N_-(\BH_{\a
V})=O(\a^q)$ with $2q>d$. Based upon the Hardy inequalities
established in Section \ref{hardy1}, we obtain the upper estimates
similar to those known for the continuous {\sh} operator. We also
analyze an example showing that these estimates are order-sharp.

In Section \ref{sparse} we analyze a special class of
so-called {\it sparse potentials}. The estimates we derive for
such potentials, depend not only on the values of $V(x)$ (as, say,
in  \eqref{1:rlc-d}), but also on the geometry of the
support $\{x: V(x)\neq0\}$. In particular, the results of this
section allow us to construct potentials $V$ such that
\begin{equation*}
    N_-(\BH_{\a V})=O(\a^\frac{d}2)\qquad \text{but}\qquad
    N_-(\BH_{\a V})\neq o(\a^\frac{d}2).
\end{equation*}
By \eqref{0:W-d}, such potentials cannot lie in $\ell_\frac{d}2$.

The results of Section \ref{sparse} show also that the class of
Hardy inequalities, described in Section \ref{hardy1}, does not
cover all the possibilities. More precisely, we prove that any
bounded sparse potential is a discrete Hardy weight, which may be
considered as a rather unexpected result. We consider the material
of Section \ref{sparse} as the most important in the whole paper.

The concluding Section \ref{concl} is devoted to a discussion of some
related results. \vs

Our notation is rather standard, or it is explained in the course
of the presentation. We denote by $C,c$, etc., without an index,
various constants whose value is indifferent for us. Our notation
for the more important constants is clear from the following
example: $C_{1.5}$ is the constant appearing for the first time in
the inequality (1.5). Sometimes, we explicitly indicate the
parameters affecting the value of $C$.\vskip0.3cm

The work on the paper started in April 2008, when G.R. was
enjoying the hospitality of the Weizmann Institute of Science.
G.R. acknowledges his gratitude to the Institute.

We also express our thanks to Professor V. Maz'ya for a useful
discussion.

\section{Preliminaries. Proof of \thmref{RLC-d}}\label{prel}
\subsection{Birman -- Schwinger operator.}\label{B-Sch}
When studying the quantity $N_-(\BH_{\a V})$, it is usually
convenient to move on to another, but equivalent, setting of the
problem.

Let $d\ge3$, and let $V$ be a bounded, real-valued function on
$\Z^d$. In the Hilbert space $\CH^1(\Z^d)$ we consider the
quadratic form
\begin{equation}\label{1:1}
    \bb_V[u]=\sum_{x\in\Z^d}V(x)|u(x)|^2.
\end{equation}
If $\bb_V[u]$ is bounded in $\CH^1(\Z^d)$, it generates in this
space a bounded linear operator, say $\BB_V$. We call it {\it the
Birman -- Schwinger operator} for our original problem. Since $V$
is supposed to be real-valued, the operator $\BB_V$ is
self-adjoint. If $V\ge 0$, it is non-negative.

For a non-negative compact operator $\BT$, we denote by
$\l_j(\BT)$ its positive eigenvalues, counted according to their
multiplicities and numbered in order of decrease. The symbol
$n_+(s,\BT),\ s>0$, stands for the distribution function of the
eigenvalues:
\[n_+(s,\BT)=\#\{j:\l_j(\BT)>s\}.\]

The following statement is a particular case of the general Birman
-- Schwinger principle; see, e.g., \cite{BS2} for its exposition.

\begin{prop}\label{BSchw} Let $V\ge 0$. If the operator $\BB_V$ is
compact, then for the Schr\"odinger operator \eqref{0:1} the
number of negative eigenvalues is finite for any $\a>0$. Moreover,
the equality holds:
\begin{equation}\label{1:2}
    N_-(\BH_{\a V})=n_+(\a^{-1},\BB_V),\qquad\forall \a>0.
\end{equation}

Conversely, if $N_-(\BH_{\a V})<\infty$ for all $\a>0$, then the
operator $\BB_V$ is compact and, therefore, \eqref{1:2} is valid.
\end{prop}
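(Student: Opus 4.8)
The plan is to reduce the statement to the Glazman variational description of the negative spectrum and then read it off from the spectral decomposition of $\BB_V$. First I would recall that $N_-(\BH_{\a V})$ equals the maximal dimension of a subspace $L\subset\CF$ on which the quadratic form $Q_0[u]-\a\,\bb_V[u]$ of $\BH_{\a V}$ is negative. Because $\CF$ is dense in $\CH^1$ and both $Q_0$ and $\bb_V$ are continuous in the $\CH^1$-metric — the latter precisely because $\BB_V$ is assumed bounded — the same maximal dimension is attained when $L$ ranges over subspaces of $\CH^1$. Writing $Q_0[u]=\|u\|_{\CH^1}^2$ and $\bb_V[u]=\lu\BB_V u,u\ru_{\CH^1}$, the negativity $Q_0[u]-\a\,\bb_V[u]<0$ is equivalent to the Rayleigh-quotient inequality $\lu\BB_V u,u\ru_{\CH^1}>\a^{-1}\|u\|_{\CH^1}^2$, so that
\begin{equation*}
N_-(\BH_{\a V})=\max\bigl\{\dim L:\ L\subset\CH^1,\ \lu\BB_V u,u\ru_{\CH^1}>\a^{-1}\|u\|_{\CH^1}^2\ \text{for all }u\in L\setminus\{0\}\bigr\}.
\end{equation*}

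For the direct statement, assume $\BB_V$ is compact. Then $\BB_V\ge0$ has purely discrete positive spectrum $\l_1\ge\l_2\ge\cdots\to0$, and the Courant--Fischer min-max principle identifies the right-hand side above: the supremal dimension of a subspace on which the Rayleigh quotient exceeds $s:=\a^{-1}$ is exactly the number of eigenvalues $\l_j>s$, i.e. $n_+(\a^{-1},\BB_V)$. This is finite for every $\a>0$ since the $\l_j$ accumulate only at $0$, which yields simultaneously the finiteness of $N_-(\BH_{\a V})$ and the equality \eqref{1:2}.

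For the converse, suppose $N_-(\BH_{\a V})<\infty$ for all $\a>0$; I must show that the bounded operator $\BB_V$ is in fact compact. The variational identity above continues to hold, so it remains to control its right-hand side spectrally. Let $E$ denote the spectral measure of $\BB_V$. The maximal dimension of a subspace on which $\lu\BB_V u,u\ru_{\CH^1}>s\|u\|_{\CH^1}^2$ equals $\rank E((s,\infty))$: the range of this projection is such a subspace, while any subspace of strictly larger dimension must meet $\Rn E((-\infty,s])$, where the Rayleigh quotient is at most $s$. Hence $N_-(\BH_{\a V})=\rank E((\a^{-1},\infty))$, and its finiteness for every $\a>0$ means that $E((s,\infty))$ has finite rank for every $s>0$. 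Since $\BB_V\ge0$, this says precisely that $\BB_V$ has no essential spectrum away from $0$, i.e. $\BB_V$ is compact; the first part then applies and delivers \eqref{1:2}.

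The main obstacle is this last equivalence in the converse direction: one must argue cleanly that finiteness of $\rank E((s,\infty))$ for all $s>0$ coincides with compactness, equivalently that any point of $\s_{\ess}(\BB_V)$ strictly above $s$ would force $\rank E((s,\infty))$ to be infinite and hence $N_-(\BH_{\a V})=\infty$. This is exactly the bridge between the Weyl criterion for the essential spectrum and the Glazman counting, and it is where care with strict versus non-strict inequalities (so that the projection ranges genuinely realize the supremal dimension) and with the accumulation of eigenvalues at $0$ is needed. The forward direction, by contrast, is routine once the variational identity is set up.
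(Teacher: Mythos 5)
The paper does not actually prove Proposition \ref{BSchw}: it is quoted as a special case of the general Birman--Schwinger principle with a pointer to \cite{BS2}. Your argument is the standard form-theoretic proof of that principle --- Glazman's variational lemma for $N_-(\BH_{\a V})$ combined with the spectral theorem for the non-negative operator $\BB_V$ in $\CH^1$ --- and it is correct; what it buys over the paper's citation is a self-contained treatment, including the converse via the identity $N_-(\BH_{\a V})=\rank E((\a^{-1},\infty))$. Two points deserve to be made explicit. First, the equality of the suprema over subspaces of the $\ell_2$-form domain and over subspaces of $\CH^1$ should be routed through $\CF$, which is dense in both spaces in their respective metrics: a finite-dimensional subspace on which the form is strictly negative remains strictly negative under a small perturbation of a basis, so both suprema coincide with the one over $\CF$. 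You gesture at this, but since $\ell_2$ sits inside $\CH^1$ as a dense, non-closed subspace, this is the one step where the argument could silently go wrong. Second, in the converse you refer to ``the bounded operator $\BB_V$,'' yet boundedness of $\bb_V$ on $\CH^1$ is not a hypothesis and must itself be deduced from $N_-(\BH_{\a V})<\infty$: finiteness for a single $\a>0$ yields a subspace $M\subset\CF$ of finite codimension on which $\bb_V\le\a^{-1}Q_0$, and since the non-negative form $\bb_V$ is finite, hence bounded, on any finite-dimensional complement of $M$, it is bounded on all of $\CF$ and extends to $\CH^1$. With these two remarks inserted, your proof is complete and matches what the cited source establishes.
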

\vs

Recall that the modulus of an operator $\BT$ is defined as
\[ |\BT|=(\BT^*\BT)^\frac12.\]
Evidently, $|\BT|=\BT$ if the latter operator is non-negative. The
compact operators, such that for some $q>0$
\[ n_+(s,|\BT|)=O(s^{-q}),\]
or, equivalently, $\l_j(|\BT|)=O(j^{-\frac1{q}})$, form a linear
space (an ideal in the algebra of all bounded operators). This
space is usually denoted by $\GS_{q,\infty}$, or by $\Sg_q$. The
latter notation is used in the book \cite{BSbook}, and we keep it
here.
 The spaces $\Sg_q$ are the `weak analogues'
of the classical Schatten ideals $\GS_q$. The functional
\begin{equation}\label{1:6}
    \|\BT\|_{\Sg_q}=
    \sup_{s>0}\, s n_+(s,|\BT|)^\frac1{q}
    =\sup_j\,j^\frac1{q}\l_j(|\BT|)
\end{equation}
defines a quasi-norm on $\Sg_q$. If $q>1$, and only in this case,
an equivalent norm exists on $\Sg_q$. However, the quasi-norm
\eqref{1:6} is more convenient for estimates.

The spaces $\Sg_q$ are complete and non-separable. The condition
\[n_+(s,|\BT|)=o(s^{-q}), \  s\to0,\] or, equivalently,
$\l_j(|\BT|)=o(j^{-\frac1{q}})$, singles out a closed separable
subspace of $\Sg_q$, which we denote by $\Sg_q^\circ$. See Section
11.6 in the book \cite{BSbook} for more detail on the classes
$\Sg_q$ and $\Sg_q^\circ$. \vs

Along with the quasi-norm \eqref{1:6}, let us consider the
functionals
\begin{equation*}
    \D_q(\BT)=\limsup\limits_{s\to 0}(s^qn_+(s,\BT)),\qquad
\d_q(\BT)=\liminf\limits_{s\to 0}(s^qn_+(s,\BT)).
\end{equation*}
They are well-defined for any $\BT\in\Sg_q$. The following
elementary fact, see Theorem 11.6.7 in \cite{BSbook}, will be
useful later.
\begin{prop}\label{lims} The functionals $\D_q,\d_q$ are continuous
in the topology of the space $\Sg_q$.
\end{prop}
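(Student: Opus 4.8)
The plan is to derive a two-sided perturbation estimate for $\D_q$ and $\d_q$ under small changes of $\BT$ in the $\Sg_q$ quasi-norm, and then to pass to the limit along a convergent sequence. The natural starting point is that both functionals are finite on $\Sg_q$: by the definition \eqref{1:6} one has $n_+(s,\BT)\le n_+(s,|\BT|)\le s^{-q}\|\BT\|_{\Sg_q}^q$, so $s^q n_+(s,\BT)\le\|\BT\|_{\Sg_q}^q$ for every $s>0$, whence $\D_q(\BT),\d_q(\BT)\le\|\BT\|_{\Sg_q}^q<\infty$. This same estimate, applied to a small remainder, will be used to absorb perturbations.

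The key ingredient is the standard subadditivity of the counting function for self-adjoint compact operators: for any $\BT',\BT''$ and any $s',s''>0$,
\begin{equation*}
    n_+(s'+s'',\BT'+\BT'')\le n_+(s',\BT')+n_+(s'',\BT''),
\end{equation*}
which follows from the Weyl inequality $\l_{j+k-1}(\BT'+\BT'')\le\l_j(\BT')+\l_k(\BT'')$ via min--max. I apply this with $\BT'=\BT_0$, $\BT''=R:=\BT-\BT_0$, and with the splitting $s=(1-\theta)s+\theta s$ for a fixed $\theta\in(0,1)$. Using $s^q n_+(\theta s,R)\le\theta^{-q}\|R\|_{\Sg_q}^q$ for the remainder, I obtain
\begin{equation*}
    s^q n_+(s,\BT)\le (1-\theta)^{-q}\bigl((1-\theta)s\bigr)^q n_+\bigl((1-\theta)s,\BT_0\bigr)+\theta^{-q}\|R\|_{\Sg_q}^q.
\end{equation*}
Taking $\limsup_{s\to0}$ gives $\D_q(\BT)\le(1-\theta)^{-q}\D_q(\BT_0)+\theta^{-q}\|\BT-\BT_0\|_{\Sg_q}^q$; the very same bound holds for $\d_q$, where one applies $\liminf(f+g)\le\liminf f+\limsup g$ so that the remainder contributes through its $\limsup$. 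By the symmetry of the construction in $\BT$ and $\BT_0$, the companion inequalities with the two operators interchanged hold as well.

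Now let $\BT_n\to\BT$ in $\Sg_q$. The two estimates read
\begin{equation*}
   \D_q(\BT)-\theta^{-q}\|\BT_n-\BT\|_{\Sg_q}^q\le(1-\theta)^{-q}\D_q(\BT_n),\quad \D_q(\BT_n)\le(1-\theta)^{-q}\D_q(\BT)+\theta^{-q}\|\BT_n-\BT\|_{\Sg_q}^q.
\end{equation*}
Letting $n\to\infty$ removes the remainder terms, and then letting $\theta\to0$ (so that $(1-\theta)^{-q}\to1$) squeezes $\limsup_n\D_q(\BT_n)$ and $\liminf_n\D_q(\BT_n)$ against $\D_q(\BT)$, which forces $\D_q(\BT_n)\to\D_q(\BT)$; the argument for $\d_q$ is word for word the same. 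The only genuinely delicate point I expect is the treatment of $\d_q$: because the counting inequality is only subadditive, the $\liminf$ of the sum does not decompose into a sum of $\liminf$'s, and one must lean on $\liminf(f+g)\le\liminf f+\limsup g$ together with the fact that the remainder term tends to a limit (namely $0$) along the sequence. Everything else is bookkeeping with the scaling factors $(1-\theta)^{-q}$ and $\theta^{-q}$; the real content is entirely carried by the two classical facts invoked above, the relation between $\|\cdot\|_{\Sg_q}$ and $n_+$ from \eqref{1:6} and the Weyl--Ky Fan subadditivity of $n_+$.
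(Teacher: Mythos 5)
Your argument is correct and is essentially the standard proof: the paper itself does not prove this proposition but delegates it to Theorem 11.6.7 of \cite{BSbook}, and the argument given there rests on exactly the same two ingredients you use, namely the subadditivity $n_+(s'+s'',\BT'+\BT'')\le n_+(s',\BT')+n_+(s'',\BT'')$ and the bound $s^qn_+(s,\BT)\le\|\BT\|_{\Sg_q}^q$, combined via the splitting $s=(1-\theta)s+\theta s$. Your handling of the $\liminf$ via $\liminf(f+g)\le\liminf f+\limsup g$ and the final squeeze as $\theta\to0$ are both sound, so nothing is missing.
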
 \vs

Proposition \ref{BSchw} allows one to replace the study of the
function $N_-(\BH_{\a V})$ with the investigation of the compact
operator $\BB_V$. This is convenient, since it makes it possible
to use the powerful machinery of the theory of compact operators.

For instance, for any $q>0$ the powerlike estimate
\begin{equation*}
    N_-(\BH_{\a V})\le C\a^q,\qquad \forall\a>0
\end{equation*}
is equivalent to $\BB_V\in\Sg_q$, with $\|\BB_V\|_{\Sg_q}\le
C^\frac1q$. In a similar way,
\begin{equation*}
    N_-(\BH_{\a V})=o(\a^q)\ \Longleftrightarrow
 \ \BB_V\in\Sg_q^\circ.
\end{equation*}\vs

In particular, the estimate \eqref{1:rlc-d}, being reformulated in
terms of the operator $\BB_V$, reads in either of two equivalent
forms:
\begin{equation}\label{rlc-norm}
    \|\BB_V\|_{\Sg_\frac{d}2}\le C_{1.7}^\frac2{d}\|V\|_{\ell_\frac{d}2};
    \qquad
    n_+(s,\BB_V)\le C_{1.7}s^{-\frac{d}2}\sum_{x\in\Z^d}V(x)^{\frac{d}2}.
\end{equation}

\vs

The spaces $\Sg_q$ and $\Sg_q^\circ$ have their counterparts
$\ell_{q,w}$ and $\ell^\circ_{q,w}$ in the theory of function
spaces on countable sets. Let a function $V$ on $\Z^d$ be such
that $V(x)\to 0$ as $|x|\to\infty$. By re-arranging the numbers
$|V(x)|,\ x\in\Z^d$, in the non-increasing order, we obtain a
sequence of non-negative numbers (notation $V^*_j,\ j\in\N$). Then
$V^*_j\to 0$ as $j\to\infty$.

Besides, for any $s>0$ we denote
\begin{equation}\label{1:8}
    E(s,V)=\{x\in\Z^d:|V(x)|>s\};\qquad \nu(s,V)=\#E(s,V).
\end{equation}
 By definition,
\[ \{V\in\ell_{q,w}\}\ \Longleftrightarrow\
\{\nu(s,V)=O(s^{-q})\}\ \Longleftrightarrow\
\{V^*_j=O(j^{-\frac1{q}})\},\] and
\[ \{V\in\ell_{q,w}^\circ\}\ \Longleftrightarrow
\{\nu(s,V)=o(s^{-q})\}\ \Longleftrightarrow\
\{V^*_j=o(j^{-\frac1{q}})\}.\]
 The functional
\begin{equation*}
    \|V\|_{\ell_{q,w}}=
\sup_{s>0}\,s\nu(s,V)^\frac1{q} =\sup_j\,j^{\frac1{q}}V^*_j
\end{equation*}
defines the standard quasi-norm in $\ell_{q,w}$. \vs

The next statement elucidates the role of classes $\ell_{q,w}$ and
$\ell_{q,w}^\circ$ in the problems we are studying. For its proof,
see Section \ref{hardy} below.
\begin{prop}\label{role}
Let $d\ge 3$ and $V\in\ell_{\frac{d}2,w}(\Z^d),\ V\ge0.$ Then the
operator $\BB_V$ is bounded. If
$V\in\ell_{\frac{d}2,w}^\circ(\Z^d)$, this operator is compact.
\end{prop}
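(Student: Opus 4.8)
The plan is to reduce the statement to a single form inequality and then extract boundedness from a sharp, Lorentz-refined discrete Sobolev inequality. By the very definition of $\BB_V$ in \eqref{1:1}, this operator is bounded on $\CH^1(\Z^d)$ exactly when the form $\bb_V$ is bounded relative to $Q_0$, and then $\|\BB_V\|$ equals the least constant $C$ for which
\[
  \sum_{x\in\Z^d}V(x)|u(x)|^2\le C\,Q_0[u],\qquad u\in\CF .
\]
So the whole statement follows once the left-hand side is controlled by the weak quasi-norm $\|V\|_{\ell_{d/2,w}}=\|V\|_{\ell_{d/2,\infty}}$, uniformly on the dense set $\CF$.

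The first main step is the discrete Sobolev inequality in its Lorentz form
\[
  \|u\|_{\ell_{\frac{2d}{d-2},2}(\Z^d)}\le C\,Q_0[u]^{1/2},\qquad u\in\CF .
\]
Writing $f=(-\D)^{1/2}u$, so that $\|f\|_{\ell_2}^2=\langle-\D u,u\rangle=Q_0[u]$ and $u=(-\D)^{-1/2}f$, this is the mapping property $(-\D)^{-1/2}:\ell_2\to\ell_{\frac{2d}{d-2},2}$. For $d\ge3$ the walk is transient: the Green function $G(x-y)=(-\D)^{-1}(x,y)$ (the reproducing kernel of $\CH^1$) is finite on the diagonal and obeys the classical bound $0\le G(x-y)\le C(1+|x-y|)^{2-d}$, so that the kernel of $(-\D)^{-1/2}$ is dominated by $C(1+|x-y|)^{1-d}$, a discrete Riesz potential of order $1$. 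This majorant lies in $\ell_{\frac d{d-1},\infty}(\Z^d)$, and by the Young--O'Neil convolution inequality in Lorentz spaces a kernel in $\ell_{\frac d{d-1},\infty}$ maps $\ell_{2,2}=\ell_2$ into $\ell_{\frac{2d}{d-2},2}$, which is precisely the displayed inequality.

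With this in hand, boundedness follows by Hölder's inequality in Lorentz spaces. Since $\ell_{d/2,\infty}$ is dual to $\ell_{\frac d{d-2},1}$ and $\bigl\||u|^2\bigr\|_{\ell_{\frac d{d-2},1}}=\|u\|_{\ell_{\frac{2d}{d-2},2}}^2$, I obtain
\[
  \sum_{x\in\Z^d}V(x)|u(x)|^2
  \le C\,\|V\|_{\ell_{d/2,w}}\,\bigl\||u|^2\bigr\|_{\ell_{\frac d{d-2},1}}
  =C\,\|V\|_{\ell_{d/2,w}}\,\|u\|_{\ell_{\frac{2d}{d-2},2}}^2
  \le C\,\|V\|_{\ell_{d/2,w}}\,Q_0[u],
\]
so that $\BB_V$ is bounded with $\|\BB_V\|\le C\|V\|_{\ell_{d/2,w}}$. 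I expect the Lorentz endpoint in the second step to be the crux: one must secure the target $\ell_{\frac{2d}{d-2},2}$ rather than the weaker $\ell_{\frac{2d}{d-2}}$, because only the $2$ in the second index pairs by duality with the weak space $\ell_{d/2,w}$; a plain Sobolev inequality would only yield the hypothesis $V\in\ell_{d/2}$ and recover \eqref{rlc-norm}, not the weak-type statement sought here.

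For compactness, suppose $V\in\ell_{d/2,w}^\circ$; then $V(x)\to0$, and I approximate $V$ in the $\ell_{d/2,w}$ quasi-norm by the truncations $V_n$ obtained by keeping the $n$ largest values of $|V|$. The rearrangement of $V-V_n$ is the tail $\bigl(V^*_{j+n}\bigr)_{j}$, whence $\|V-V_n\|_{\ell_{d/2,w}}=\sup_{j}j^{2/d}V^*_{j+n}\le\sup_{m\ge n}m^{2/d}V^*_m\to0$ by the defining $o$-condition. Each $V_n$ has finite support, so $\BB_{V_n}$ has finite rank; and by the norm estimate just proved, $\|\BB_V-\BB_{V_n}\|=\|\BB_{V-V_n}\|\le C\|V-V_n\|_{\ell_{d/2,w}}\to0$. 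Thus $\BB_V$ is a norm-limit of finite-rank operators and hence compact.
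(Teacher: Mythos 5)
Your proof is correct in outline but takes a genuinely different route from the paper. The paper does not attack the weak-$\ell_{d/2}$ Hardy inequality on the lattice directly: it builds the poly-linear interpolation operator $\CI:\CH^1(\Z^d)\to\CH^1(\R^d)$, shows the two-sided equivalence $cQ_0[u]\le D[\CI u]\le c'Q_0[u]$ together with the domination $\sum_x W(x)|u(x)|^2\le C\int\BW|\CI u|^2$, and then imports the continuous result (Maz'ya's capacity criterion, in the form of Proposition \ref{3:HardyCont}: every nonnegative $\BW\in L_{d/2,w}(\R^d)$ is a Hardy weight). Your argument stays entirely on $\Z^d$ and replaces the capacity input by the Lorentz-refined Sobolev embedding $\|u\|_{\ell_{2d/(d-2),2}}\le CQ_0[u]^{1/2}$, obtained from a Riesz-potential bound on the kernel of $(-\D)^{-1/2}$ plus O'Neil's convolution theorem, followed by Lorentz--H\"older duality between $\ell_{d/2,\infty}$ and $\ell_{d/(d-2),1}$. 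Your endpoint bookkeeping is right: the second index $2$ in $\ell_{2d/(d-2),2}$ is exactly what is needed to pair with the weak space, and the plain Sobolev inequality would not suffice. The compactness argument (finite-rank truncations converging in the $\ell_{d/2,w}$ quasi-norm, hence in operator norm) is the same as the paper's one-line ``extends by continuity''. What the paper's route buys is that no lattice-side harmonic analysis is needed at all; what yours buys is a self-contained discrete proof and, as a by-product, the explicit constant $\|\BB_V\|\le C\|V\|_{\ell_{d/2,w}}$ in the form the paper states in Proposition \ref{3:HardyDisc}.

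One step is asserted rather than proved, and the inference as written is not valid: the pointwise bound $0\le G(x-y)\le C(1+|x-y|)^{2-d}$ on the Green function does \emph{not} by itself imply the bound $C(1+|x-y|)^{1-d}$ on the kernel of $(-\D)^{-1/2}$ --- one cannot in general read off kernel decay of $\BT^{1/2}$ from that of $\BT$. The stated bound is true, but it needs a separate argument, e.g.\ the subordination formula $(-\D)^{-1/2}=\pi^{-1/2}\int_0^\infty t^{-1/2}e^{t\D}\,dt$ combined with Gaussian upper bounds for the discrete heat kernel (available from \cite{VCS}), or direct estimation of the Fourier integral $(2\pi)^{-d}\int_{\Tor^d}e^{ixz}\om(z)^{-1/2}dz$ with $\om$ as in \eqref{3:Ex.6b}. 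With that point supplied, the proof is complete.
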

Note that the conditions given by this proposition are only
sufficient, but not necessary. Say, they are violated for the
sparse potentials considered in Section \ref{sparse}.

\subsection{On \thmref{RLC-d}.}\label{discrRLC}
As it was already mentioned in Section \ref{intro}, \thmref{RLC-d}
is basically known. Still, in order to make our exposition
self-contained, we outline its proof.

{\it Proof of \thmref{RLC-d}.} It is well known that the discrete
Laplacian generates a positivity preserving semigroup in the space
$\ell_2(\Z^d)$. If $d\ge3$, the following lower estimate, with
some $c>0$, is satisfied for the quadratic form $Q_0$ in
\eqref{0:3}:
\begin{equation*}
    Q_0[u]\ge c\left(\sum_{x\in\Z^d}|u(x)|^\sigma\right)^\frac2{\sigma},\
    \sigma=\frac{2d}{d-2},\qquad\forall u\in \CH^1(\Z^d).
\end{equation*}
This is a particular case of Theorem IV.5.2 in \cite{VCS}. Thus,
the assumptions of Theorem 1.2 in \cite{LS} are fulfilled, whence
the estimate \eqref{1:rlc-d}.

Another way to obtain the latter estimate is to derive it from the
results of the paper \cite{RS1}; see also Section 9 in \cite{RS}.
It is also possible  to deduce \eqref{1:rlc-d} directly from
\eqref{1:rlc}. This will be explained in Section \ref{eigest}.\vs

For justifying \eqref{0:W-d}, let us consider the operator
$\BB_V$. The sequences $V$ with finite support form a dense subset
of $\ell_\frac{d}2(\Z^d)$. By \eqref{0:5}, for such $V$ the
non-zero spectrum of $\BB_V$ is finite and therefore,
\eqref{0:W-d} is satisfied, so that $\D_{\frac{d}2}(\BB_V)=0$.
Using the inequality \eqref{rlc-norm} and Proposition \ref{lims},
we conclude that \eqref{0:W-d} extends to all non-negative
potentials $V\in \ell_{\frac{d}2}(\Z^d)$.

The proof of \thmref{RLC-d} is complete.\vs

Note that the estimate \eqref{0:W-d} does not replace the,
formally weaker, estimate \eqref{1:rlc-d}. An important property
of \eqref{1:rlc-d} is the explicit dependence on the function $V$.
It is this dependence that enabled us, by approximating $V$ by
potentials with finite support, to pass to the limit and obtain
\eqref{0:W-d}.

\section{Estimates in $\Sg_q,\ q<\frac{d}2$}\label{2q<d}
As it is clear from the comparison of Theorems \ref{RLC} and
\ref{RLC-d}, the estimates obtained in this section have no
analogues for the operators on $\R^d$. We are going to show that,
unlike in the continuous case, the function $N_-(\BH_{\a V})$ can
grow as $O(\a^q)$ with arbitrarily small $q>0$, which corresponds
to the arbitrarily fast powerlike decay of the eigenvalues
$\l_j(\BB_V)$.

\begin{thm}\label{4:Thm}Let $d\ge3,\ q< \frac{d}{2}$, and
$V\in \ell_{q,w}(\Z^d),\ V\ge0$. Then $\BB_V\in\Sg_q$, and
\begin{equation}\label{4:Thm.2}
     \|\BB_V\|_{\Sg_q}\le C_{3.1}\|V\|_{\ell_{q,w}},
\end{equation}
with a constant depending only on $d$ and $q$. Equivalently,
\begin{equation*}
    N_-(\BH_{\a V})\le C_{3.1}^q\a^q \|V\|_{\ell_{q,w}}^q.
    \end{equation*}
\end{thm}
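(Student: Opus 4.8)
The plan is to exploit the already-established endpoint estimate
\eqref{rlc-norm} together with the hypothesis $2q<d$, and to interpolate
between the weak-$\ell_{d/2}$ bound and a trivial bound at the other end.
The key structural fact is that $\BB_V$ depends on $V$ in a way that is
compatible with interpolation: by the Birman--Schwinger principle, the
eigenvalue counting function $n_+(s,\BB_V)$ is controlled by a sum of
contributions from the \emph{level sets} $E(s,V)$ of the potential.
First I would decompose $V$ dyadically according to its values. Writing
$V=\sum_{k\in\Z} V_k$ where $V_k$ is the restriction of $V$ to the
annulus $\{x : 2^k<V(x)\le 2^{k+1}\}$, each $V_k$ is essentially
$2^k$ times a characteristic function of a finite set $\Om_k$ whose
cardinality is controlled by $\nu(2^k,V)$.

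Next, for each piece I would apply the endpoint inequality
\eqref{rlc-norm} in the form $\|\BB_{V_k}\|_{\Sg_{d/2}}\le
C_{1.7}^{2/d}\|V_k\|_{\ell_{d/2}}$, but this alone loses the gain
coming from $q<d/2$; the point of passing to $\Sg_q$ is that the
singular values decay faster. The natural route is to establish a
weak-type bound directly for $n_+(s,\BB_V)$. Using the variational
(max-min) characterization of eigenvalues, I would bound the number of
eigenvalues of $\BB_V$ exceeding $s$ by splitting $V$ at a level
threshold depending on $s$: the ``high'' part $V\cdot\chi_{\{V>\t\}}$
contributes at most $\#E(\t,V)=\nu(\t,V)$ eigenvalues by the elementary
rank bound \eqref{0:5}, while the ``low'' part $V\cdot\chi_{\{V\le\t\}}$
has operator norm in $\CH^1$ controlled through the endpoint
$\Sg_{d/2}$-estimate, so that its contribution to $n_+(s,\cdot)$ is
bounded using
\[
n_+(s,\BB_{V\cdot\chi_{\{V\le\t\}}})\le
C s^{-d/2}\sum_{V(x)\le\t}V(x)^{d/2}.
\]
Choosing $\t$ as a suitable power of $s$ and inserting the weak-$\ell_q$
control $\nu(\t,V)\le \|V\|_{\ell_{q,w}}^q\,\t^{-q}$ and the
corresponding tail estimate $\sum_{V(x)\le\t}V(x)^{d/2}\le
C\|V\|_{\ell_{q,w}}^q\,\t^{d/2-q}$ (valid precisely because
$q<d/2$ makes the tail sum convergent against the weak-$\ell_q$
distribution), both terms combine to give $n_+(s,\BB_V)\le C
s^{-q}\|V\|_{\ell_{q,w}}^q$. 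By the definition \eqref{1:6} of the
quasi-norm this is exactly $\BB_V\in\Sg_q$ with \eqref{4:Thm.2}, and the
equivalent statement for $N_-(\BH_{\a V})$ follows from Proposition
\ref{BSchw} and the dictionary between powerlike estimates and membership
in $\Sg_q$ recorded in Section \ref{prel}.

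The main obstacle I anticipate is justifying the splitting estimate for
$n_+$ rigorously: bounding $n_+(s,\BB_V)$ by the sum of the ranks/counts
coming from the two pieces requires the Ky Fan--type inequality
$n_+(s_1+s_2,\BA+\BB)\le n_+(s_1,\BA)+n_+(s_2,\BB)$ for self-adjoint
operators, applied to $\BB_V=\BB_{V\cdot\chi_{\{V>\t\}}}+
\BB_{V\cdot\chi_{\{V\le\t\}}}$, and one must verify that the high part
genuinely has rank at most $\nu(\t,V)$ as an operator in $\CH^1$ (which
follows since it factors through multiplication by a finitely supported
function). The optimization of the free parameter $\t=\t(s)$ to balance
the two contributions is the routine part; getting the exponents
$-q$ and the convergence of the tail sum to align is where the
restriction $q<d/2$ is used in an essential way, and this is precisely
the phenomenon that has no continuous analogue.
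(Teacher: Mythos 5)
Your proposal is correct and follows essentially the same route as the paper: split $V$ at a threshold comparable to $s$, bound the high part by the rank estimate \eqref{0:5} via $\nu(\cdot,V)$, bound the low part by the endpoint estimate \eqref{rlc-norm}, combine with the Weyl (Ky Fan) inequality for $n_+$, and evaluate the tail sum $\sum_{V(x)\le\t}V(x)^{d/2}\lesssim \|V\|_{\ell_{q,w}}^q\t^{d/2-q}$ using $q<d/2$. The paper simply takes $\t=s$, which already balances the two contributions, and carries out your tail estimate via a Stieltjes integral against $\nu(\t,V)$.
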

\vs For $q=\frac{d}2$ the statement is no more true: the inclusion
$V\in\ell_{\frac{d}2,w}$ guarantees only the boundedness of
$\BB_V$.
\begin{proof}[Proof of Theorem \ref{4:Thm}] For a fixed $s$,
let us split $V$ into two terms,
\begin{equation*}
    V=V_1+V_2,
\end{equation*}
where $V_1(x)=V(x)$ at the vertices $x\in E(s,V)$, cf.
\eqref{1:8}, and $V_1(x)=0$ otherwise. By the Weyl inequality
(see, e.g., Theorem 9.2.9 in \cite{BSbook}),
\begin{equation}\label{4:Thm.3}
     n_+(s, \BB_{ V})\le  n_+({s}/2,\BB_{V_1})+
     n_+({s}/2,\BB_{V_2}).
\end{equation}
The first term in \eqref{4:Thm.3} is estimated by \eqref{0:5},
\begin{equation}\label{first}
    n_+(s/2,\BB_{V_1})\le \nu(s/2,V)\le(2P/s)^q,
\qquad P=\|V\|_{\ell_{q,w}}.
\end{equation}
For the second  term we apply the estimate \eqref{rlc-norm}, with
$V$ replaced by $V_2$. This gives
\begin{equation}\label{second}
    n_+({s}/2,\BB_{V_2})\le 2^{\frac{d}2}C_{1.7}
    s^{-\frac{d}{2}}\sum_{V(x) \le s}V(x)^{\frac{d}2}.
\end{equation}
Now we transform the latter sum:
\begin{gather*}
\sum_{V(x) \le s}V(x)^{\frac{d}2}=-\int_0^s
\t^{\frac{d}2}d\nu(\t,V)=\frac{d}2\int_0^s\t^{\frac{d}2-1}
\nu(\t,V)d\t\\ \le {\frac{d}2}P^q\int_0^s\t^{\frac{d}2-1-q}d\t=
\frac{d}{d-2q}s^{\frac{d}2-q}P^q.
\end{gather*}
Together with \eqref{4:Thm.3}, \eqref{first}, and \eqref{second},
this gives $n(s,\BB_V)\le CP^qs^{-q}$ which is equivalent to
\eqref{4:Thm.2}.
\end{proof}

It is possible also  to obtain the lower estimate of
$n_+(s,\BB_V)$ in terms of the function $\nu(\cdot,V)$. This
estimate does not require any preliminary assumptions about the
non-negative function $V$.
\begin{thm} For any $V\ge0$ the inequality holds:
\begin{equation}\label{4.thmbelow}
    n_+(s, \BB_{ V})\ge 2^{-d} \nu(2sd,V).
\end{equation}
\end{thm}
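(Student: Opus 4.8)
The plan is to manufacture a large test subspace on which the generalized Rayleigh quotient $\bb_V[u]/Q_0[u]$ exceeds $s$, and then invoke the variational (Glazman) principle. Since $\BB_V\ge0$ is the operator in $\CH^1$ defined by $\langle\BB_V u,v\rangle_{\CH^1}=\bb_V[u,v]$ while $\|u\|_{\CH^1}^2=Q_0[u]$, the counting function admits the lower bound
\[
 n_+(s,\BB_V)\ge\dim L
\]
for every subspace $L\subset\CH^1$ on which $\bb_V[u]>s\,Q_0[u]$ for all $u\in L\setminus\{0\}$. Thus it suffices to exhibit such an $L$ of dimension at least $2^{-d}\nu(2sd,V)$.

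The building blocks are the indicator functions $\d_x\in\CF\subset\CH^1$, with $\d_x(y)=1$ if $y=x$ and $\d_x(y)=0$ otherwise. A direct count of the edges incident to $x$ gives $\bb_V[\d_x]=V(x)$ and $Q_0[\d_x]=2d$. The decisive point is that the cross-terms in $Q_0$ vanish as soon as the base points are pairwise non-adjacent in the lattice graph: if $S\subset\Z^d$ is such that no two of its points differ by a single $\pm 1_j$, then the edge sets incident to distinct points of $S$ are disjoint, and hence for $u=\sum_{x\in S}c_x\d_x$,
\[
 \bb_V[u]=\sum_{x\in S}V(x)\,|c_x|^2,\qquad Q_0[u]=2d\sum_{x\in S}|c_x|^2 .
\]
If in addition $V(x)>2sd$ for every $x\in S$, then $\bb_V[u]>2sd\sum_{x\in S}|c_x|^2=s\,Q_0[u]$ for all nonzero $u$ in $L=\operatorname{span}\{\d_x:x\in S\}$, and $\dim L=\#S$.

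It remains to choose $S$ independent and large inside $E(2sd,V)$. For this I partition $\Z^d$ into the $2^d$ parity classes $\Z^d_\e=\{x:x_j\equiv\e_j\ (\mathrm{mod}\ 2),\ j=1,\dots,d\}$ indexed by $\e\in\{0,1\}^d$. Two points of the same class differ by an even amount in each coordinate, so each $\Z^d_\e$ is an independent set. By the pigeonhole principle one of these classes contains at least $2^{-d}\nu(2sd,V)$ of the points of $E(2sd,V)$; take $S=E(2sd,V)\cap\Z^d_{\e^*}$ for such a class $\e^*$. Then $S$ is independent, $V>2sd$ on $S$ by the definition \eqref{1:8} of $E(\cdot,V)$, and $\#S\ge 2^{-d}\nu(2sd,V)$, so the previous paragraph yields $n_+(s,\BB_V)\ge\#S\ge 2^{-d}\nu(2sd,V)$.

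The only genuinely delicate point is the vanishing of the $Q_0$ cross-terms, which forces the non-adjacency requirement on $S$; the parity decomposition is exactly the device that secures a large non-adjacent subset of $E(2sd,V)$, and it is responsible for the factor $2^{-d}$. (A two-coloring of the bipartite lattice would in fact furnish the better constant $1/2$, but the cruder bound $2^{-d}$ is enough here and keeps the independence manifest.)
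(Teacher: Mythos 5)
Your proof is correct and follows essentially the same route as the paper: decompose $\Z^d$ into the $2^d$ translates of $(2\Z)^d$, pick by pigeonhole a class meeting $E(2sd,V)$ in at least $2^{-d}\nu(2sd,V)$ points, and test with the delta functions there, which are orthogonal both in $\CH^1$ and for $\bb_V$, with $Q_0[\d_x]=2d$. Your parenthetical observation that the bipartite two-coloring would improve the constant to $1/2$ is a valid aside not present in the paper.
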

\begin{proof} For a fixed $\t$, we consider the set $E(\t, V)$.
Let the sublattice $(2\Z)^d$ consist of the points in $\Z^d$ with
all components divisible by $2$. For any $\bk\in \{0,1\}^d$
consider the set $(2\Z)^d+\bk$. These sets are mutually disjoint.
Therefore, the sets $\Om_\bk(V)=E(\t, V)\cap((2\Z)^d+\bk)$ are
disjoint as well, and at least for one value of $\bk\in
\{0,1\}^d$, we have
\begin{equation*}
    \#\Om_\bk(V)\ge 2^{-d}\nu(\t,V).
\end{equation*}
Now, consider the subspace $\CL\subset\CH^1(\Z^d)$ formed by the
functions
\[u(x)=\sum_{y\in\Om_\bk(V)}c_y\d(x-y).\]
The functions $\d(x-y),\ y\in\Om_\bk(V)$, are mutually orthogonal
both in the metric of $\CH^1(\Z^d)$ and with respect to the
quadratic form $\bb_V$ in \eqref{1:1}. So, for any $u\in\CL$ we
have $Q_0[u]=2d\sum_{y\in\Om_\bk(V)}|c_y|^2$, while
$\bb_V[u]=\sum_{y\in\Om_\bk(V)}|c_y|^2 V(y)$. Since $V(y)\ge\t$
for $y\in\Om_\bk(V)$, we have constructed a subspace of dimension
greater than $2^{-d}\nu(\t,V)$ such that $\bb_V[u]\ge (2d)^{-1}\t
Q_0[u]$. This immediately implies \eqref{4.thmbelow} by the
variational principle.
\end{proof}
\begin{cor}\label{twosided}
Let $d\ge3$, $0<q<\frac{d}2$, and $V\ge0$. Then $\BB_V\in \Sg_q$
if and only if $V\in\ell_{q,w}(\Z^d)$, and, moreover, there are
constants $c_0,c_1>0$ such that
\[ c_0\|V\|_{\ell_{q,w}(\Z^d)}\le \|\BB_V\|_{\Sg_q}\le
c_1\|V\|_{\ell_{q,w}(\Z^d)},\] and also
\begin{equation}\label{lsup}
    c_0\limsup_{s\to0}s^q\nu(s,V)\le\D_q(\BB_V)\le c_1
\limsup_{s\to0}s^q\nu(s,V).
\end{equation}
In particular,
\[ N_-(\BH_{\a V})=o(\a^q)\ \Longleftrightarrow\
V\in\ell_{q,w}^\circ(\Z^d).\]
\end{cor}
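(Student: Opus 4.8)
The plan is to assemble Corollary~\ref{twosided} by combining the upper bound of \thmref{4:Thm} with the lower bound \eqref{4.thmbelow}, reading both through the distribution function $\nu(\cdot,V)$. First I would establish the two-sided quasi-norm equivalence. The upper estimate $\|\BB_V\|_{\Sg_q}\le c_1\|V\|_{\ell_{q,w}}$ is exactly \eqref{4:Thm.2}, so one direction is free (valid precisely because $q<\frac{d}2$). For the reverse inequality I would use \eqref{4.thmbelow}: since $n_+(s,\BB_V)\ge 2^{-d}\nu(2sd,V)$, multiply by $s^q$ and take the supremum. Writing $t=2sd$ converts $\sup_s s^q n_+(s,\BB_V)^{1/q}$ into a lower bound of the form $c_0\sup_t t^q\nu(t,V)=c_0\|V\|_{\ell_{q,w}}^q$ (after the $1/q$ power), with $c_0$ absorbing the factors $2^{-d}$ and $(2d)^{-q}$. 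Taken together these give the sandwich $c_0\|V\|_{\ell_{q,w}}\le\|\BB_V\|_{\Sg_q}\le c_1\|V\|_{\ell_{q,w}}$, and in particular $\BB_V\in\Sg_q \Leftrightarrow V\in\ell_{q,w}$.

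Next I would treat the asymptotic functional $\D_q$ in \eqref{lsup}. The same two inequalities, now applied in the limit $s\to0$ rather than as a global supremum, do the job. From \eqref{4:Thm.3}--\eqref{second} one sees that the contribution of $\BB_{V_2}$ to $s^q n_+(s,\BB_V)$ is controlled by $\frac{d}{d-2q}\,s^q\cdot s^{-q}\,(\text{tail sum})$; the key point is that as $s\to0$ the truncated sum $\sum_{V(x)\le s}V(x)^{d/2}$ is governed by $\limsup_{s\to0}s^q\nu(s,V)$ through the Abel summation identity already displayed in the proof of \thmref{4:Thm}. This yields the upper half $\D_q(\BB_V)\le c_1\limsup_{s\to0}s^q\nu(s,V)$. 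For the lower half I would again invoke \eqref{4.thmbelow}, taking $\limsup_{s\to0}$ of $s^q n_+(s,\BB_V)\ge 2^{-d}(2d)^{-q}(2sd)^q\nu(2sd,V)$, which gives $\D_q(\BB_V)\ge c_0\limsup_{s\to0}s^q\nu(s,V)$.

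Finally, the separable-class statement $N_-(\BH_{\a V})=o(\a^q)\Leftrightarrow V\in\ell_{q,w}^\circ$ follows formally. Recall from Section~\ref{prel} that $N_-(\BH_{\a V})=o(\a^q)$ is equivalent to $\BB_V\in\Sg_q^\circ$, i.e.\ $n_+(s,\BB_V)=o(s^{-q})$, which means $\D_q(\BB_V)=0$; and $V\in\ell_{q,w}^\circ$ means $\nu(s,V)=o(s^{-q})$, i.e.\ $\limsup_{s\to0}s^q\nu(s,V)=0$. The equivalence \eqref{lsup} forces one of these $\limsup$'s to vanish exactly when the other does, giving the claim.

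The main obstacle I anticipate is bookkeeping rather than conceptual: the lower bound \eqref{4.thmbelow} and the upper bound \eqref{4:Thm.2} are stated with respect to shifted and scaled arguments ($\nu(2sd,V)$ versus $\nu(s,V)$), so one must verify that the rescaling $t=2sd$ leaves the relevant suprema and $\limsup$'s invariant up to the multiplicative constants $c_0,c_1$ and does not distort the class membership. This is routine once one notes that $\sup_s s^q\nu(\lambda s,V)=\lambda^{-q}\sup_t t^q\nu(t,V)$ and that the $\limsup$ behaves identically under positive scaling of the argument; the same invariance guarantees that the $o(\cdot)$ conditions defining $\Sg_q^\circ$ and $\ell_{q,w}^\circ$ are preserved.
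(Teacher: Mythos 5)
Your argument is correct, and it reproduces the paper's proof in all but one place. The two-sided quasi-norm inequality and the left half of \eqref{lsup} are obtained exactly as in the paper: the upper bound is \eqref{4:Thm.2}, and the lower bound comes from \eqref{4.thmbelow} via the rescaling $t=2sd$, which, as you note, only costs the factors $2^{-d}$ and $(2d)^{-q}$ and preserves both $\sup$ and $\limsup$. The final equivalence $N_-(\BH_{\a V})=o(\a^q)\Leftrightarrow V\in\ell^\circ_{q,w}$ is likewise read off from \eqref{lsup} as you do. Where you diverge is the right half of \eqref{lsup}: you re-run the splitting $V=V_1+V_2$ from the proof of \thmref{4:Thm}, replacing the global bound $\nu(\t,V)\le P^q\t^{-q}$ by the local bound $\t^q\nu(\t,V)\le L+\vare$ valid for $\t$ below some $s_0$ (with $L=\limsup_{s\to0}s^q\nu(s,V)$), and then pass to the $\limsup$; this works because the Abel-summation integral $\int_0^s\t^{d/2-1}\nu(\t,V)\,d\t$ only sees the values of $\nu$ on $(0,s)$, and convergence at $0$ again uses $q<\frac{d}2$. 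The paper instead argues softly: it sets $R(V)=\limsup_{s\to0}s\,\nu(s,V)^{1/q}$, truncates $V$ at finitely many points to get $V_\vare$ with $\|V_\vare\|_{\ell_{q,w}}\le R(V)+\vare$, observes that a finite-rank perturbation does not change $\D_q$ (this is where Proposition \ref{lims} enters), and applies \eqref{4:Thm.2} to $V_\vare$. Your route is more self-contained (it avoids Proposition \ref{lims} and the finite-rank invariance of $\D_q$) at the cost of redoing the quantitative estimate; the paper's route recycles \thmref{4:Thm} as a black box. Both yield the stated constants up to the harmless $q$-th powers, and the only blemish in your write-up is the notational slip $\sup_s s^q n_+(s,\BB_V)^{1/q}$ for what should be either $\sup_s s\,n_+(s,\BB_V)^{1/q}$ or its $q$-th power, which you already correct in passing.
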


Here only the relation \eqref{lsup} needs a justification. To this
end, let us denote
\[ R(V)=\limsup_{s\to0}\,s\nu(s,V)^\frac1{q},\]
then $R(V)\le\|V\|_{\ell_{q,w}}$. For any $\vare>0$, one can
change the values of $V(x)$ at a finite number of points
$x\in\Z^d$ in such a way that for the new potential, say
$V_\vare$, we have
\[ \|V_\vare\|_{\ell_{q,w}}\le R(V)+\vare.\]
Then $\D_q(\BB_{V_\vare})=\D_q(\BB_V)$ and, by \eqref{4:Thm.2},
\[\D_q(\BB_V)\le \|\BB_{V_\vare}\|_{\Sg_q}^q
\le C_{3.1}^q(R(V)+\vare)^q.\] Since $\vare$ is arbitrary, we come
to the right inequality in \eqref{lsup}. The left inequality is a
direct consequence of \eqref{4.thmbelow}.

\section{Discrete Hardy inequalities}\label{hardy1}
Here we collect the material on the discrete multidimensional
Hardy type inequalities. We need them for studying the estimates
of the operator $\BB_V$ in the classes $\Sg_q$ with $q>\frac{d}2$.
As it was mentioned in the Introduction, this material should be
considered as known on the `folklore level'. However, we could not
find its exposition in the literature, and decided to present it
here.

{ In this section we describe a rather simple class of discrete
Hardy inequalities. Namely, with any function $W(x)\ge0$ on $\Z^d$
we associate a function $\BW(\x)$ on $\R^d$, which assumes the
value $W(x)$ on the unit cell determined by the vertex $x$; see
Subsection \ref{poly} for details. Then we show that if the
"continuous" Hardy inequality is satisfied with the weight
$\BW(\x)$, then the discrete Hardy inequality with the weight
$W(x)$ holds on $\Z^d$. Naturally, this class of discrete Hardy
inequalities is rather restricted. Later, in Section \ref{sparp},
we will show that there exist discrete Hardy inequalities of a
different origin.}

\subsection{Operator of poly-linear interpolation.}\label{poly}
Let $\CH^1(\R^d), d\ge3,$ stand for the homogeneous Sobolev space.
The metric in $\CH^1(\R^d)$ is defined by the standard Dirichlet
integral,
\[ D[U]=\int_{\R^d}|\nabla U(\x)|^2d\x.\]

We introduce an operator $\CI:\CH^1(\Z^d)\to \CH^1(\R^d)$ in the
following way. In every elementary cubic cell in $\R^d$ we
interpolate $2^d$ values of $u$ at the vertices of the cell by a
poly-linear function. For instance, the function $u(x), \
x\in\{0,1\}^d$, such that $u(0,\ldots,0)=1$ and $u(x)=0$ at the
remaining vertices, interpolates as
\[U(\x)=(\CI u)(\x)=\prod_j(1-\x_j),\qquad \x=(\x_1,\ldots,\x_d)\in
[0,1]^d.\] Such interpolation defines a mapping $\CI:u\mapsto U$
of the space $\CH^1(\Z^d)$ to a certain subspace of piecewise
poly-linear, continuous functions on $\R^d$.
\begin{lem}\label{lem1} The image of the mapping $\CI$ is the
space $\CH\CP^1(\R^d)$ of all piecewise poly-linear, continuous
functions with the finite Dirichlet integral. The quadratic forms
$Q_0[u]$, see \eqref{0:3}, and $D[\CI u]=\int_{\R^d}|\nabla (\CI
u)|^2dx$ are equivalent: there are constants $c,c'>0$ such that
\begin{equation}\label{3:interp.2}
    cQ_0[u]\le D[\CI u]\le c'Q_0[u],\qquad\forall u\in
\CH^1(\Z^d).
\end{equation}
\end{lem}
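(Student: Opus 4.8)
The plan is to reduce everything to a local, per-cell computation, since the heart of the lemma is the two-sided bound \eqref{3:interp.2}; once this is in hand the description of the image follows with little extra work. First I would split the Dirichlet integral over the unit cubic cells of $\R^d$,
\[ D[\CI u]=\sum_{\text{cells }C}\int_C|\nabla(\CI u)|^2\,d\x, \]
and observe that on each cell $C$ the interpolant $\CI u$ is a fixed linear function of the $2^d$ values of $u$ at the vertices of $C$; hence $\int_C|\nabla(\CI u)|^2\,d\x$ is a quadratic form in those vertex values. Alongside it I would consider the local sum $\sum_{e\subset C}|u(e_+)-u(e_-)|^2$ of squared differences over the $d\,2^{d-1}$ edges $e$ of $C$, which is a second quadratic form in the same data. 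By translation invariance it suffices to compare the two on the reference cell $[0,1]^d$.

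On $[0,1]^d$ I would regard both expressions as quadratic forms on the space $\R^{2^d}$ of vertex data $(u(\bk))_{\bk\in\{0,1\}^d}$. The decisive point is that they have the same kernel: the Dirichlet form vanishes exactly when the poly-linear interpolant is constant, i.e. when all vertex values coincide, while the edge form vanishes exactly when the vertex values are constant along every edge, which again forces them to coincide because the edge-graph of the cube is connected. Two non-negative quadratic forms on a finite-dimensional space with the same kernel are equivalent, so there are constants $c,c'>0$ depending only on $d$ with
\[ c\sum_{e\subset C}|u(e_+)-u(e_-)|^2\le \int_C|\nabla(\CI u)|^2\,d\x\le c'\sum_{e\subset C}|u(e_+)-u(e_-)|^2 . \]
Summing over all cells and using that each lattice edge belongs to exactly $2^{d-1}$ cells, the flanking sums collapse to $2^{d-1}Q_0[u]$, and \eqref{3:interp.2} follows after absorbing the factor $2^{d-1}$ into the constants.

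For the image, I would first note that $\CI u$ is continuous: the restriction of a poly-linear function to a face of a cell is itself the poly-linear interpolant of the vertex values lying on that face, so two cells sharing a face agree there. Together with the upper bound in \eqref{3:interp.2} this shows $\CI u\in\CH\CP^1(\R^d)$ for every $u\in\CH^1(\Z^d)$. Conversely, given $U\in\CH\CP^1(\R^d)$, its restriction $u(x)=U(x)$ to the lattice satisfies $Q_0[u]<\infty$ by the lower bound in \eqref{3:interp.2}, and $\CI u=U$ by uniqueness of the poly-linear interpolant; hence $\CI$ is onto $\CH\CP^1(\R^d)$.

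The step I expect to demand the most care is the verification that the two local forms have \emph{precisely} the same kernel, since this is exactly what makes the finite-dimensional equivalence applicable; the connectivity of the cube's edge-graph and the fact that a poly-linear function with vanishing gradient must be constant are what is needed here. The only other point requiring attention is the functional-analytic identification of $\CH^1(\Z^d)$ itself: to see that a finite-energy lattice sequence genuinely lies in the completion of $\CF$ one uses the decay at infinity furnished by the Hardy inequality \eqref{0:Hardy}, after which the combinatorial bookkeeping of edge multiplicities is entirely routine.
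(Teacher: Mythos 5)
Your proof is correct and follows essentially the same route as the paper: the two-sided bound is obtained by comparing, on the reference cell, the local Dirichlet form and the local edge-difference form as two non-negative quadratic forms on the $2^d$-dimensional space of vertex data with the same (constant) kernel, and then summing over cells. You are in fact somewhat more complete than the paper, which only verifies $\CI u\in\CH^1(\R^d)$ and leaves the surjectivity onto $\CH\CP^1(\R^d)$ implicit, whereas you make the edge-multiplicity $2^{d-1}$ and the converse inclusion explicit.
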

\begin{proof}
Consider the space $\CL(\CC)$ of poly-linear functions on  the
unit cell $\CC=[0,1]^d$. Clearly, $\dim\CL(\CC)=2^d$. On
$\CL(\CC)$ we consider the quadratic forms
\begin{equation*}
\wt Q[U;\CC]=\sum_{x,y\in\{0,1\}^d\atop x\sim
y}|U(x)-U(y)|^2;\qquad \wt D[U;\CC]=\int_\CC|\nabla U(\x)|^2d\x.
\end{equation*}
These two quadratic forms vanish on the same subspace in
$\CL(\CC)$, consisting of constant functions. Therefore, they are
equivalent, i.e., with some $c,c'>0$ we have
\begin{equation*}
    c\wt Q[U;\CC]\le\wt D[U;\CC]\le c'\wt Q[U;\CC].
\end{equation*}
By adding up similar inequalities for all the cells $\CC+x,\
x\in\Z^d$, we arrive at \eqref{3:interp.2}.

It remains to check that $\CI u\in\CH^1(\R^d)$ for any
$u\in\CH^1(\Z^d)$. It is sufficient to show this for the dense in
$\CH^1(\Z^d)$ subset of functions with finite support. But for any
such $u$ the function $\CI u$ has compact support and, therefore,
it can be approximated in $\CH^1$ by functions in $C_0^\infty$, in
the metric of the Dirichlet integral. Hence, it lies in
$\CH^1(\R^d)$.
\end{proof}
\vs

Now, with any non-negative function $W(x), \ x\in\Z^d$, we
associate a function $\BW=\CJ W$ on $\R^d$, setting $\BW(\x)=W(x)$
for $\x\in \CC+x$ where $\CC$ is the same as above.

\begin{lem}\label{3:interp.W}For any weight $W$ on $\Z^d$ and the
corresponding weight $\BW=\CJ W$ on $\R^d$, one has
\begin{equation}\label{3:interp.3}
     \sum_{x\in\Z^d}W(x)|u(x)|^2\le C_{4.2} \int_{\R^d}\BW(\x)|U(\x)|^2 d\x,
     \qquad U=\CI u,
\end{equation}
for any function $u$ on $\Z^d$. The constant in \eqref{3:interp.3}
depends only on $d$.
\end{lem}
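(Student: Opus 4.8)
The plan is to reduce the claimed inequality to a single elementary cube and there invoke the nondegeneracy of the $L^2$ inner product on the finite-dimensional space of poly-linear functions. First I would rewrite the right-hand side cell by cell. Since $\BW=\CJ W$ is constant, equal to $W(x)$, on each cell $\CC+x$, and the cells $\{\CC+x\}_{x\in\Z^d}$ tile $\R^d$ up to a set of measure zero,
\[
\int_{\R^d}\BW(\x)|U(\x)|^2\,d\x=\sum_{x\in\Z^d}W(x)\int_{\CC+x}|U(\x)|^2\,d\x .
\]
On the left-hand side, because $U=\CI u$ reproduces the vertex data, $u(x)=U(x)$ for every $x\in\Z^d$. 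Hence it suffices to produce a constant $C_{4.2}=C_{4.2}(d)$ with
\[
|U(x)|^2\le C_{4.2}\int_{\CC+x}|U(\x)|^2\,d\x\qquad(x\in\Z^d),
\]
after which multiplying by $W(x)\ge0$ and summing over $x$ yields \eqref{3:interp.3} term by term.

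Next I would prove this pointwise-to-integral bound. Restricted to the cube $\CC+x$, the function $U$ is poly-linear and thus lies in the $2^d$-dimensional space $\CL(\CC+x)$, determined by the values of $u$ at the $2^d$ vertices of that cube, with $x$ itself one of the corners. By translation it is enough to treat the unit cell $\CC=[0,1]^d$ and the corner $0$. Writing $U$ in the poly-linear basis $\phi_v(\x)=\prod_{j:v_j=1}\x_j\prod_{j:v_j=0}(1-\x_j)$, $v\in\{0,1\}^d$, one has $\phi_v(w)=\d_{vw}$ on vertices, so the coefficients are exactly the vertex values $U(v)$, and $\int_\CC|U|^2=\langle Gc,c\rangle$ in the coefficient vector $c=(U(v))_v$, where $G$ is the Gram matrix of the $\phi_v$. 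This $G$ is the $d$-fold tensor power of the one-dimensional Gram matrix of $\{1-t,t\}$ on $[0,1]$, hence positive definite, with smallest eigenvalue $\lambda_{\min}(G)$ depending only on $d$. Consequently $\int_\CC|U|^2\ge\lambda_{\min}(G)\,|c|^2\ge\lambda_{\min}(G)\,|U(0)|^2$, so the local inequality holds with $C_{4.2}=\lambda_{\min}(G)^{-1}$, a constant depending only on $d$.

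The hard part will be essentially nonexistent here; the only thing requiring care is that the local bound be uniform in the cell, which is guaranteed by the translation invariance of poly-linear interpolation, so the single constant $\lambda_{\min}(G)^{-1}$ serves for every $x$. I would add that the nondegeneracy of $G$ can be seen abstractly, without computing eigenvalues: if $\int_\CC|U|^2=0$ then the polynomial $U$ vanishes on a set of positive measure and hence identically, forcing $U(0)=0$; this is all that is needed to dominate the corner value by the $L^2$ norm on the finite-dimensional space $\CL(\CC)$, and it makes the dependence of $C_{4.2}$ on $d$ alone transparent.
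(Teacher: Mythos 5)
Your proof is correct and follows essentially the same route as the paper: reduce to a single cell (the paper does this by linearity in $W$, you by comparing the two sums term by term) and then compare two seminorms on the $2^d$-dimensional space of poly-linear functions on that cell, observing that the null space of the $L^2$ form is trivial and hence contained in that of the vertex-value form. Your explicit Gram-matrix computation and the closing abstract null-space remark are just a more detailed rendering of the paper's one-line finite-dimensional argument.
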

\begin{proof}By linearity, it is sufficient to prove \eqref{3:interp.3}
for $W(x)$ having support at one point, say, at $0\in\Z^d$. Then
the quadratic forms in \eqref{3:interp.3} define Hilbert seminorms
on a $2^d$-dimensional space, with the null subspace for the
quadratic form on the right-hand side contained in the null
subspace for the one on the left-hand side. This gives the
required inequality.
\end{proof}
Note that the estimate inverse to \eqref{3:interp.3} is
impossible. Indeed, the null subspace for the quadratic form on
the left-hand side is strictly wider than for the other one.
\subsection{Hardy type inequalities}\label{hardy}
 We will call a non-negative function
$W(x),$ $x\in\Z^d$ a \emph{Hardy weight} on $\Z^d$,
 if for some
constant $H(W)$ the inequality is satisfied:
\begin{equation}\label{3:Hardy.1}
    \sum_{x\in\Z^d}W(x)|u(x)|^2\le H(W) Q_0[u], \qquad \forall
    u\in\CH^1(\Z^d).
\end{equation}
The best possible constant $H(W)$ in \eqref{3:Hardy.1} will be
called the Hardy constant for $W$. We will say that a Hardy weight
$W$ is {\it normalized}, if $H(W)=1$. This definition carries over
to the lattice case the classical definition of Hardy weights in
$\R^d$:
\begin{equation*}
\int_{\R^d} \BW(\xi)|U(\xi)|^2d\xi\le H(\BW)\int_{\R^d} |\nabla
U|^2d\xi, \qquad \forall U\in \CH^1(\R^d).
\end{equation*}
In the continuous case the complete description of Hardy weights
was found by Maz'ya, see \cite{Maz}, Ch. 8. There, the necessary
and sufficient condition on  a function $\BW(\x)$ to be a Hardy weight in $\R^d$ is
given in the terms of the capacity. By means of the well-known
relation between capacity and measure, a sufficient condition for
$\BW$ to be a Hardy weight can be expressed in more elementary
terms, see, e.g., Proposition 5.1 in \cite{BS2}.
\begin{prop}\label{3:HardyCont} Any function $\BW\ge 0$ in the weak
class $L_{\frac{d}2,w}(\R^d)$, $d\ge 3$, is a Hardy weight on
$\R^d$ with Hardy constant $H(\BW)$ satisfying $H(\BW)\le
C(d)\|\BW\|_{L_{\frac{d}2,w}(\R^d)}$.
\end{prop}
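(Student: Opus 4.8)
The plan is to deduce the weighted inequality from the Sobolev embedding theorem, but in a form sharp enough to use only the weak-type information on $\BW$. The naive route --- Sobolev embedding into $L_\sigma(\R^d)$ with $\sigma=\frac{2d}{d-2}$, followed by the ordinary H\"older inequality against $|U|^2\in L_{d/(d-2)}$ --- would pair $\BW$ against an $L_{d/2}$ quantity and hence force the stronger hypothesis $\BW\in L_{d/2}(\R^d)$, just missing the weak class. Concretely, the layer-cake decomposition $\int\BW|U|^2\,d\x=\int_0^\infty\bigl(\int_{\{\BW>t\}}|U|^2\bigr)dt$ together with $\int_{\{\BW>t\}}|U|^2\le\|U\|_{L_\sigma}^2\,|\{\BW>t\}|^{2/d}$ and the weak bound $|\{\BW>t\}|^{2/d}\le\|\BW\|_{L_{\frac{d}2,w}}/t$ produces a logarithmically divergent $t$-integral. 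The remedy is to replace the crude $L_\sigma$ control of $U$ by its refinement on the Lorentz scale.

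First I would invoke the refined Sobolev inequality
\[
\|U\|_{L_{\sigma,2}(\R^d)}\le C(d)\Bigl(\int_{\R^d}|\nabla U|^2\,d\x\Bigr)^{1/2},\qquad \sigma=\frac{2d}{d-2},
\]
valid for all $U\in\CH^1(\R^d)$, $d\ge3$. This is stronger than the classical embedding into $L_\sigma=L_{\sigma,\sigma}$ precisely because the second Lorentz index is the summable exponent $2<\sigma$; it follows by representing $U$ as a Riesz potential of order one of the components of $\nabla U$ and applying the O'Neil / Hardy--Littlewood--Sobolev inequality for the Riesz potential on Lorentz spaces, which maps $L_2=L_{2,2}$ into $L_{\sigma,2}$. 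Next I would square this using the elementary rule $\bigl\||U|^2\bigr\|_{L_{p,q}}=\|U\|_{L_{2p,2q}}^2$ for the Lorentz quasinorms; with $p=\frac{d}{d-2}$ and $q=1$ this gives $\bigl\||U|^2\bigr\|_{L_{d/(d-2),1}}=\|U\|_{L_{\sigma,2}}^2$. Finally, applying the generalized H\"older inequality in Lorentz spaces to the pairing $\int_{\R^d}\BW|U|^2\,d\x$, with the index relations $\frac2d+\frac{d-2}d=1$ and $\frac1\infty+\frac11=1$, yields
\[
\int_{\R^d}\BW(\x)|U(\x)|^2\,d\x\le C\,\|\BW\|_{L_{\frac{d}2,w}(\R^d)}\,\bigl\||U|^2\bigr\|_{L_{d/(d-2),1}(\R^d)}.
\]
Combining the last two displays gives \eqref{3:Hardy.1} in the continuous form with $H(\BW)\le C(d)\|\BW\|_{L_{\frac{d}2,w}(\R^d)}$, as claimed.

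The main obstacle is the Lorentz-refined Sobolev estimate into $L_{\sigma,2}$: the entire point of the weak-class hypothesis $\BW\in L_{\frac{d}2,w}$ is that it must be tested against an object carrying the summable second index $1$, and it is exactly the improvement of $U$'s second index from $\sigma$ down to $2$ that supplies this, via the squaring identity. Verifying the H\"older bound in Lorentz spaces and the behavior of the quasinorms under squaring is routine bookkeeping; the one substantive analytic input is the refined Sobolev inequality, which one may alternatively cite directly from Maz'ya's book \cite{Maz} or from Proposition~5.1 of \cite{BS2}, where the same sufficient condition for a continuous Hardy weight is recorded.
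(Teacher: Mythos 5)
Your proof is correct, but it follows a genuinely different route from the paper's. The paper does not argue from first principles at all: it invokes Maz'ya's complete capacitary characterization of Hardy weights on $\R^d$ (\cite{Maz}, Ch.~8) and then passes from capacity to Lebesgue measure via the isocapacitary inequality, citing Proposition~5.1 of \cite{BS2} for the resulting sufficient condition in terms of $L_{\frac{d}2,w}$. You instead give a direct, self-contained derivation on the Lorentz scale: the refined Sobolev embedding $\|U\|_{L_{\sigma,2}}\le C\|\nabla U\|_{L_2}$ with $\sigma=\frac{2d}{d-2}$ (via the Riesz-potential representation and O'Neil's theorem), the exact squaring identity $\||U|^2\|_{L_{d/(d-2),1}}=\|U\|_{L_{\sigma,2}}^2$ for rearrangement-invariant quasinorms, and the Hardy--Littlewood/O'Neil pairing of $L_{\frac{d}2,\infty}$ against $L_{d/(d-2),1}$. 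All three steps check out, and your diagnosis of why the naive $L_\sigma$--H\"older route just misses (logarithmic divergence in the layer-cake integral) is exactly right: the improvement of the second Lorentz index from $\sigma$ to $2$ is what makes the weak-type hypothesis on $\BW$ usable. The trade-off between the two approaches: the capacitary route yields the full necessary-and-sufficient characterization, of which the measure-theoretic condition is only a corollary, whereas your argument is more elementary, avoids capacity theory entirely, and isolates the single analytic input (the Lorentz-refined Sobolev inequality). One small slip in your closing remark: Proposition~5.1 of \cite{BS2} records the Hardy-weight statement itself, not the refined Sobolev inequality, so it is a substitute for the whole proposition rather than for that one ingredient.
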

The  discrete analogue of Proposition \ref{3:HardyCont} also
holds.

\begin{prop}\label{3:HardyDisc} Let $W\ge0$ be a function defined on
$\Z^d,\ d\ge3$.  Suppose that $\BW=\CJ W$ is a Hardy weight on
$\R^d$. Then $W$ is a Hardy weight on $\Z^d$, and $H(W)\le C
H(\BW)$. In particular, any $W\in \ell_{\frac{d}{2},w}(\Z^d)$ is a
Hardy weight on $\Z^d$ with $H(W)\le
C(d)\|W\|_{\ell_{\frac{d}{2},w}(\Z^d)}.$
\end{prop}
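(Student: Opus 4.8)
The plan is to prove the discrete inequality by composing the two interpolation lemmas of this subsection with the hypothesis that $\BW$ is a Hardy weight on $\R^d$; no genuinely new estimate is needed, so the argument is a short chaining of facts already in hand. Fix $u\in\CH^1(\Z^d)$ and set $U=\CI u$. By \lemref{lem1} we have $U\in\CH^1(\R^d)$, and the right-hand inequality in \eqref{3:interp.2} gives $D[U]\le c'Q_0[u]$. By \lemref{3:interp.W}, applied to this same pair $(u,U)$, the discrete weighted sum is controlled by the continuous one:
\[
\sum_{x\in\Z^d}W(x)|u(x)|^2\le C_{4.2}\int_{\R^d}\BW(\x)|U(\x)|^2\,d\x.
\]

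Into the right-hand side I would insert the continuous Hardy inequality for $\BW$: since $U\in\CH^1(\R^d)$ and $\BW$ is assumed to be a Hardy weight, $\int_{\R^d}\BW|U|^2\,d\x\le H(\BW)\,D[U]$. Chaining the three estimates yields
\[
\sum_{x\in\Z^d}W(x)|u(x)|^2\le C_{4.2}H(\BW)\,D[U]\le C_{4.2}c'H(\BW)\,Q_0[u],
\]
valid for every $u\in\CH^1(\Z^d)$. This is precisely the discrete Hardy inequality \eqref{3:Hardy.1} with $H(W)\le C_{4.2}c'H(\BW)=:C\,H(\BW)$, which gives the first assertion.

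For the final (``in particular'') claim I would first check that $\BW=\CJ W$ lies in $L_{\frac{d}2,w}(\R^d)$ with the same quasi-norm as $W$ in $\ell_{\frac{d}2,w}(\Z^d)$. This is immediate from the construction of $\CJ$: the function $\BW$ is constant, equal to $W(x)$, on each unit cell $\CC+x$, and these cells have measure $1$ and tile $\R^d$. Hence for every $s>0$ the level set $\{\x:\BW(\x)>s\}$ is the union of the cells $\CC+x$ with $W(x)>s$, so $\meas\{\x:\BW(\x)>s\}=\nu(s,W)$ and therefore $\|\BW\|_{L_{\frac{d}2,w}(\R^d)}=\|W\|_{\ell_{\frac{d}2,w}(\Z^d)}$. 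Proposition~\ref{3:HardyCont} then shows that $\BW$ is a Hardy weight on $\R^d$ with $H(\BW)\le C(d)\|\BW\|_{L_{\frac{d}2,w}(\R^d)}=C(d)\|W\|_{\ell_{\frac{d}2,w}(\Z^d)}$, and combining this with the bound $H(W)\le C\,H(\BW)$ just obtained gives the stated estimate.

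There is no real obstacle here: the proposition is essentially a formal consequence of \lemref{lem1} and \lemref{3:interp.W}, which carry all the analytic content. The only point deserving a line of verification is the quasi-norm identity $\|\BW\|_{L_{\frac{d}2,w}}=\|W\|_{\ell_{\frac{d}2,w}}$ used in the last part, and even this is routine, since $\CJ$ preserves distribution functions cell by cell because every cell has unit measure.
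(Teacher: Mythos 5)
Your argument is correct and is exactly the paper's proof, merely written out in full: the first assertion is the chain of \eqref{3:interp.3}, the continuous Hardy inequality for $\BW$, and the right-hand inequality of \eqref{3:interp.2}, and the second assertion follows from the equimeasurability of $W$ and $\CJ W$ together with Proposition~\ref{3:HardyCont}. Nothing to add.
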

\begin{proof}  The first statement follows immediately from the
inequalities  \eqref{3:interp.2} and \eqref{3:interp.3}.  As for
the second statement, note that  the functions $W$ on $\Z^d$ and
$\BW=\CJ W$ on $\R^d$ are equimeasurable, therefore $\BW\in
L_{\frac{d}2,w}(\R^d),$ with the same quasi-norm, so we can use
Proposition \ref{3:HardyCont}.
\end{proof}

The simplest example of a function in $\ell_{\frac{d}2,w}(\Z^d)$ is
$W(x)=(|x|^2+1)^{-1}$. By Proposition \ref{3:HardyDisc}, it is a
Hardy weight on $\Z^d$. This justifies the inequality
\eqref{0:Hardy}. \vs

Now we are in a position to prove Proposition \ref{role}. Indeed,
the boundedness of the operator $\BB_V$ is just a re-formulation
of the property of $V$ to be a Hardy weight. Evidently, $\BB_V$ is
compact for any $V$ with finite support. This property extends to
the whole of $\ell_{\frac{d}2,w}^\circ$ by continuity.

\section{Estimates in $\Sg_q,\ q>\frac{d}2$}\label{2q>d}
Having the Hardy type inequalities at our disposal, we now move on
to the estimation of the operator $\BB_V$ in the classes $\Sg_q$
with $q>\frac{d}2$. Results in this section are the direct
analogues (actually, immediate consequences) of the corresponding
results for the operators on $\R^d$, obtained in \cite{BS,BS2}. In
order to distinguish between the discrete and the continuous
Laplacians, we (in  this section only) will denote them by $\D_d$
and by $\D_c$ respectively. \vs

\subsection{Eigenvalue estimates}\label{eigest}
The material of Section \ref{hardy1} allows one to prove that any
statement on the eigenvalue behavior of the operator $\BB_V$,
expressed in terms of the standard, or weak $L_q$-classes,
automatically implies its discrete counterpart. In particular,
this shows that the estimate \eqref{1:rlc-d} follows directly from
\eqref{1:rlc}, thus giving one more proof of \thmref{RLC-d}. Below
we demonstrate, how this idea implements for  obtaining the
discrete analogues of the results in \cite{BS,BS2}.

Let a function $\BW$ be a normalized Hardy weight on $\R^d,\
d\ge3$. The following is the formulation of Theorem 4.1 in
\cite{BS2}.
 For a function $\BF\ge0$ on $\R^d$ we say that
 $\BF\in L_{q,w}(\BW^\frac{d}2)$ if
 \begin{equation*}
    \|\BF\|^q_{L_{q,w}(\BW^\frac{d}2)}=\sup_{t>0}\left(t^q\int
    \limits_{\BF(\x)>t\BW(\x)}\BW^{d/2}d\x\right)<\infty.
 \end{equation*}

\begin{prop}\label{3:stand} Let $d\ge3$, and let $\BW>0$ be a
normalized Hardy weight on $\R^d$. Suppose $\frac{d}{2}<q<\infty$
and $\BV\ge 0$. Then
\begin{equation*}
    \BV/\BW\in L_{q,w}(\BW^{d/2})\ \Rightarrow\
    N_-(-\D_c-\a \BV)\le C_{5.1} \a^q
    \|\BV/\BW\|^q_{L_{q,w}(\BW^{d/2})}
\end{equation*}
where the constant depends on $d$ and $q$.

If, besides,  $\lim\limits_{t\to 0,\infty}\left(t^q\,
\int\limits_{\BV(\x)>t\BW(\x)}
    \BW^{d/2}d\x\right)=0$,
    then $N_-(-\D_c-\a \BV)=o(\a^q).$
\end{prop}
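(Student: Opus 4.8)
The plan is to reduce the eigenvalue count to an operator-ideal estimate and then obtain that estimate by real interpolation between two endpoint bounds. By the continuous Birman--Schwinger principle (the $\R^d$-analogue of Proposition~\ref{BSchw}), $N_-(-\D_c-\a\BV)=n_+(\a^{-1},\BB_{\BV})$, where $\BB_{\BV}$ is the operator in $\CH^1(\R^d)$ generated by the form $\int_{\R^d}\BV|U|^2\,d\x$. Hence it suffices to establish the quasi-norm bound $\|\BB_{\BV}\|_{\Sg_q}\le C\|\BV/\BW\|_{L_{q,w}(\BW^{d/2})}$: raising it to the power $q$ and using \eqref{1:6} gives $n_+(s,\BB_{\BV})\le C^q s^{-q}\|\BV/\BW\|^q_{L_{q,w}(\BW^{d/2})}$, which is the asserted estimate with $s=\a^{-1}$. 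Writing $f=\BV/\BW$ and $d\mu=\BW^{\frac d2}\,d\x$, the source space is $L_{q,w}(\mu)$ and the quantity to control is $\sup_{t>0} t^q\mu(\{f>t\})$.

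First I would record the two endpoints of the map $f\mapsto\BB_{f\BW}=\BB_{\BV}$, which is \emph{linear} in $f$. At the upper end, since $\BW$ is a normalized Hardy weight, $\int\BV|U|^2=\int f\,\BW|U|^2\le\|f\|_{L_\infty(\mu)}\,D[U]$, so $\BB_{\BV}$ is bounded with $\|\BB_{\BV}\|\le\|f\|_{L_\infty(\mu)}$; that is, the map sends $L_\infty(\mu)$ into the algebra of bounded operators. At the lower end $q=\tfrac d2$, the continuous CLR estimate \eqref{1:rlc} in its weak form (the $\R^d$-analogue of \eqref{rlc-norm}) gives $\|\BB_{\BV}\|_{\Sg_{d/2}}\le C\|\BV\|_{L_{d/2}}=C\bigl(\int f^{d/2}\,d\mu\bigr)^{2/d}=C\|f\|_{L_{d/2}(\mu)}$, i.e. boundedness from $L_{d/2}(\mu)$ into $\Sg_{d/2}$.

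The main step is interpolation. Both scales involved are real-interpolation scales with the extreme second parameter: $(L_{d/2}(\mu),L_\infty(\mu))_{\theta,\infty}=L_{q,w}(\mu)$ and $(\Sg_{d/2},\mathcal B)_{\theta,\infty}=\Sg_q$, with $\tfrac1q=\tfrac{2(1-\theta)}{d}$, so that $q$ runs over $(\tfrac d2,\infty)$ as $\theta\in(0,1)$. Applying the real interpolation functor to the linear map $f\mapsto\BB_{\BV}$ with the two endpoint bounds yields $\|\BB_{\BV}\|_{\Sg_q}\le C\|f\|_{L_{q,w}(\mu)}$, which is exactly the desired estimate. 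I expect the genuine obstacle to be precisely this step: because of the scale invariance of $-\D_c$ the endpoint at $q=\tfrac d2$ is only a \emph{weak}-type bound (into $\Sg_{d/2}$, not the Schatten class $\GS_{d/2}$), so one cannot simply quote a strong interpolation theorem but must use the interpolation properties of the weak ideals $\Sg_q$ themselves, as developed in \cite{BSbook} (Section 11.6). Checking that a linear map with weak-type bounds at both ends carries the intermediate $L_{q,w}(\mu)$ into $\Sg_q$ is the technical heart of the argument. One might instead try the Weyl-inequality splitting $f=f\mathbf 1_{\{f>t\}}+f\mathbf 1_{\{f\le t\}}$ used in the proof of \thmref{4:Thm}, but here, in contrast to the $q<\tfrac d2$ case, it is the ``small'' part that is problematic: the measure $\mu$ is typically infinite and the CLR bound $\int_{\{f\le t\}}f^{d/2}\,d\mu$ diverges when $q>\tfrac d2$, so the splitting does not go through without extra ideas — which is why the interpolation route is preferable.

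Finally, for the statement $N_-(-\D_c-\a\BV)=o(\a^q)$ I would argue by approximation, as in \secref{discrRLC}. The hypothesis $\lim_{t\to0,\,\infty}t^q\mu(\{f>t\})=0$ says precisely that $f\in L_{q,w}^\circ(\mu)$. Truncating $f$ to a set $\{t_0\le f\le t_1\}$ of finite $\mu$-measure produces potentials for which the corresponding form is subordinate to a genuine $L_{d/2}$ piece and hence $\BB_{\BV}\in\Sg_q^\circ$, while the truncation error tends to $0$ in the $L_{q,w}(\mu)$ quasi-norm as $t_0\to0$ and $t_1\to\infty$. Combining the upper bound $\|\BB_{\BV}\|_{\Sg_q}\le C\|f\|_{L_{q,w}(\mu)}$ just obtained with the continuity of the functional $\D_q$ on $\Sg_q$ (Proposition~\ref{lims}) gives $\D_q(\BB_{\BV})=0$, i.e. $n_+(s,\BB_{\BV})=o(s^{-q})$, equivalently $N_-(-\D_c-\a\BV)=o(\a^q)$.
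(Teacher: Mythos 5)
The paper does not actually prove this proposition: it is imported verbatim as Theorem 4.1 of \cite{BS2} (``The following is the formulation of Theorem 4.1 in \cite{BS2}''), so there is no in-paper argument to measure yours against, and the relevant comparison is with the Birman--Solomyak source. Your argument is essentially the standard proof of that result: reduce via the continuous Birman--Schwinger principle to the operator $\BB_\BV$ on $\CH^1(\R^d)$, record the two endpoint bounds for the linear map $f\mapsto\BB_{f\BW}$ (into $\CB$ from $L_\infty$, using that $\BW$ is a normalized Hardy weight, and into $\Sg_{d/2}$ from $L_{d/2}(\BW^{d/2}d\x)$, using the CLR inequality \eqref{1:rlc}), and apply real interpolation with the identifications $(L_{d/2}(\mu),L_\infty(\mu))_{\theta,\infty}=L_{q,w}(\mu)$ and $(\Sg_{d/2},\CB)_{\theta,\infty}=\Sg_q$; both identities are indeed available (the second via the $K$-functional computation and reiteration in \cite{BSbook}, Section 11.6), so the step you single out as the technical heart does go through, and your observation that a naive two-term splitting fails here because $\int_{\{f\le t\}}f^{d/2}d\mu$ may diverge is accurate. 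Two small points deserve explicit mention: for interpolation the map must act on all of $L_{d/2}(\mu)+L_\infty(\mu)$, i.e.\ on signed densities $f$, so you need the routine remark that $n(s,\BB_{f\BW})\le n_+(s/2,\BB_{f_+\BW})+n_+(s/2,\BB_{f_-\BW})$ to transfer the endpoint estimates from nonnegative to signed $f$; and the identity $\|f\|_{L_\infty(\mu)}=\|f\|_{L_\infty(d\x)}$ uses $\BW>0$ a.e., which is part of the hypothesis. Your treatment of the $o(\a^q)$ statement --- truncation to $\{t_0\le f\le t_1\}$, which places the truncated potential in $L_{d/2}(\R^d)$ and hence in $\Sg_q^\circ$ for $q>\tfrac d2$, followed by continuity of $\D_q$ on $\Sg_q$ --- is correct and mirrors exactly the approximation scheme the paper itself uses to derive \eqref{0:W-d} in Section \ref{discrRLC}.
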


We now can prove the discrete version of this Proposition. Let
$W>0$ be a Hardy weight on $\Z^d,\ d\ge 3$. For a function $F\ge0$
on $\Z^d$, we say that $F\in \ell_{q,w}(W^{d/2})$ if

\begin{equation*}
    \|F\|^q_{\ell_{q,w}(W^{d/2})}=
    \sup_{t>0}\left(t^q\sum\limits_{F(x)>tW(x)}
    W(x)^{d/2}\right)<\infty.
\end{equation*}

\begin{thm}\label{3:EstimDiscrete}Let $d\ge3$, and let $W>0$ be a
function on $\Z^d$, such that $\BW=\CJ W$ is a normalized Hardy
weight on $\R^d$. Suppose $\frac{d}2<q<\infty$ and $V\ge 0$. Then
\begin{equation}\label{3:discr.1}
    V/W\in \ell_{q,w}(W^{d/2})\ \Rightarrow\
    N_-(-\D_d-\a V)\le C_{5.1} \a^q
    \|V/W\|^q_{\ell_{q,w}(W^{d/2})}.
\end{equation}

If, besides, $\lim\limits_{t\to 0,\infty}
\left(t^q\sum\limits_{V(x)>tW(x)}W(x)^{d/2}\right)=0$,
    then $N_-(-\D_d-\a V)=o(\a^q).$
\end{thm}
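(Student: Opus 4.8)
The plan is to deduce this discrete theorem directly from its continuous counterpart, Proposition~\ref{3:stand}, by transplanting test subspaces through the poly-linear interpolation operator $\CI$ of Lemma~\ref{lem1}. Concretely, I would first establish the comparison of counting functions
\begin{equation*}
    N_-(-\D_d-\a V)\le N_-(-\D_c-\a'\BV),\qquad \a'=c'C_{4.2}\,\a,\quad \BV=\CJ V,
\end{equation*}
where $c'$ is the upper constant from \eqref{3:interp.2} and $C_{4.2}$ is the constant from \eqref{3:interp.3}, and then feed the right-hand side into Proposition~\ref{3:stand}.

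To prove the comparison I would use the variational principle, by which $N_-(-\D_d-\a V)$ equals the supremum of $\dim\CL$ over all finite-dimensional subspaces $\CL\subset\CH^1(\Z^d)$ on which the discrete form is negative, i.e. $Q_0[u]<\a\,\bb_V[u]$ for every nonzero $u\in\CL$. Since the interpolation reproduces nodal values, $(\CI u)|_{\Z^d}=u$, the map $\CI$ is injective, so $\dim\CI(\CL)=\dim\CL$. For $U=\CI u$, combining the upper bound in \eqref{3:interp.2} with \eqref{3:interp.3} applied to the weight $V$ in place of $W$ gives
\begin{equation*}
    D[U]\le c'Q_0[u]< c'\a\,\bb_V[u]=c'\a\sum_{x}V(x)|u(x)|^2\le c'C_{4.2}\,\a\int_{\R^d}\BV(\x)|U(\x)|^2\,d\x.
\end{equation*}
Hence the continuous form $D[U]-\a'\int_{\R^d}\BV|U|^2\,d\x$ is negative on the $\dim\CL$-dimensional subspace $\CI(\CL)\subset\CH^1(\R^d)$, and the variational principle yields $\dim\CL\le N_-(-\D_c-\a'\BV)$. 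Taking the supremum over $\CL$ gives the comparison. The crucial structural point is that both auxiliary inequalities happen to point in the direction needed for an \emph{upper} bound on $N_-$; the reverse reduction is blocked by the failure of the inverse of \eqref{3:interp.3} noted after Lemma~\ref{3:interp.W}.

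It remains to match the weighted weak-type functionals. Because $\CJ$ extends a lattice function to be constant on each unit cell $\CC+x$, one has $\BV(\x)/\BW(\x)=V(x)/W(x)$ on $\CC+x$, and since the cell has unit volume the level set $\{\BV>t\BW\}$ is exactly the union of the cells over $\{x:V(x)>tW(x)\}$, with $\int_{\CC+x}\BW^{d/2}\,d\x=W(x)^{d/2}$. Consequently
\begin{equation*}
    \int_{\BV(\x)>t\BW(\x)}\BW^{d/2}\,d\x=\sum_{V(x)>tW(x)}W(x)^{d/2}\qquad\text{for every }t>0,
\end{equation*}
so $\BV/\BW\in L_{q,w}(\BW^{d/2})$ with $\|\BV/\BW\|_{L_{q,w}(\BW^{d/2})}=\|V/W\|_{\ell_{q,w}(W^{d/2})}$. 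Since $\BW=\CJ W$ is by hypothesis a normalized Hardy weight on $\R^d$, Proposition~\ref{3:stand} applies to $-\D_c-\a'\BV$ and, through the comparison, yields \eqref{3:discr.1} (with $C_{5.1}$ carrying the harmless extra factor $(c'C_{4.2})^q$, absorbed per the paper's convention on constants). The same identity of functionals turns the hypothesis $\lim_{t\to0,\infty}\bigl(t^q\sum_{V(x)>tW(x)}W(x)^{d/2}\bigr)=0$ into precisely the continuous $o$-hypothesis of Proposition~\ref{3:stand}, so that $N_-(-\D_c-\a'\BV)=o((\a')^q)=o(\a^q)$ and hence $N_-(-\D_d-\a V)=o(\a^q)$. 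I expect no serious obstacle here; the only points demanding care are the injectivity and dimension bookkeeping for $\CI$, and verifying that the two weak-type functionals coincide \emph{exactly} rather than merely up to a constant.
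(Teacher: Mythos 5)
Your proposal is correct and follows essentially the same route as the paper: both reduce the discrete statement to Proposition~\ref{3:stand} via the interpolation operator $\CI$, using \eqref{3:interp.2} and \eqref{3:interp.3} to compare the forms and the equimeasurability of $V/W$ with $\BV/\BW$ (constant on unit cells) to identify the weak-type functionals. The only difference is presentational — you phrase the comparison through the variational principle for counting functions, whereas the paper phrases it as eigenvalue majorization for the Birman--Schwinger operators $\BB_V$ and $\BB^0_\BV$ restricted to $\CH\CP^1(\R^d)$ — and the two formulations are equivalent.
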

\begin{proof}
Consider the operator $\BB_\BV$ in the space $\CH^1(\R^d)$,
defined by the quadratic form
\[\bb_\BV[U]=\int \BV(\x)|U(\x)|^2d\x,\qquad \BV=\CJ V.\]
By the Birman-Schwinger principle, see Proposition \ref{BSchw}
(or, more exactly, its continuous version), we have
\begin{equation*}
    N_-(\D_c-\a \BV)=n_+(\a^{-1}, \BB_\BV).
\end{equation*}

The function $\BV$ is equimeasurable with $V$, $\BW$ is
equimeasurable with $W$, and $\BV/\BW$ is equimeasurable with
$V/W$. Therefore the condition $V/W\in \ell_{q,w}(W^{d/2})$
implies $\BV/\BW\in L_{q,w}(\BW^{d/2})$, so we can apply Theorem
\ref{3:stand} and obtain the estimate
\begin{equation}\label{3:disc.3}
    n_+(\a^{-1}, \BB_\BV)\le C_{5.1} \a^q \|V/W\|_{\ell_{q,w}(W^{d/2})}.
\end{equation}

Now we consider the operator $\BB^0_\BV$ defined by the same
quadratic form $\bb_\BV[U]$, but restricted to the space
$\CH\CP^1(\R^d)$ of piecewise  poly-linear functions in
$\CH^1(\R^d)$.  Since we narrowed the domain of the quadratic
form, the eigenvalues of the operator cannot grow, so
\eqref{3:disc.3} leads to
\begin{equation}\label{3:disc.4}
    n_+(\a^{-1}, \BB^0_\BV)\le C_{5.1} \a^q \|V/W\|_{\ell_{q,w}(W^{d/2})}.
\end{equation}

 By  \eqref{3:interp.2} and \eqref{3:interp.3}, the quadratic
 form $\bb_V[u]$ in \eqref{1:1} is estimated from above by
 $\bb_\BV [\CI u]$, and the quadratic form
 $Q_0[u]$ in \eqref{0:3} is estimated from below by
 $\int|\nabla(\CI u) |^2 d\xi$.
 Therefore the eigenvalues of the operator $\BB_V$ are majorized by
 the eigenvalues of the operator $\BB_\BV$, and the estimate
 \eqref{3:discr.1} follows from \eqref{3:disc.4}. The second part
 of Theorem \ref{3:EstimDiscrete} is proved by a similar reasoning.
 \end{proof}

Theorem \ref{3:EstimDiscrete} gives much freedom in choosing the
Hardy weight $W$. The most standard choice is
$W(x)=c(|x|^2+1)^{-1}$ where $c>0$ is the normalizing constant.
For this $W$, the corresponding particular case of
\thmref{3:EstimDiscrete} was presented in \cite{RS} as Theorem
9.2. Below we repeat its formulation, in order to correct a
misprint in \cite{RS}.

\begin{cor}\label{partic}
Let $d\ge3$ and $2q>d$, and let $V\ge 0$. Then
\begin{equation}\label{estim}
    N_-(\BH_{\a V})\le C(d,q)\a^q\sup_{t>0}\left(t^q
    \sum\limits_{(|x|^2+1)V(x)>t}(|x|^2+1)^{-\frac{d}2}\right).
\end{equation}
\end{cor}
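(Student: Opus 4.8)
The plan is to read off Corollary \ref{partic} from Theorem \ref{3:EstimDiscrete} by choosing the specific Hardy weight $W(x)=c(|x|^2+1)^{-1}$, and then simply rewriting the weighted quasi-norm $\|V/W\|_{\ell_{q,w}(W^{d/2})}$ in the explicit form appearing in \eqref{estim}. All the analytic content is already contained in Theorem \ref{3:EstimDiscrete}; what remains is a choice of weight and some bookkeeping.

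First I would fix the weight. The function $(|x|^2+1)^{-1}$ lies in $\ell_{\frac{d}2,w}(\Z^d)$ --- this is precisely the example recorded after Proposition \ref{3:HardyDisc} --- so by equimeasurability the associated function $\CJ\bigl[(|x|^2+1)^{-1}\bigr]$ belongs to $L_{\frac{d}2,w}(\R^d)$ and is a Hardy weight on $\R^d$ by Proposition \ref{3:HardyCont}, with finite Hardy constant. Scaling by a suitable $c>0$, I set $W(x)=c(|x|^2+1)^{-1}$ so that $\BW=\CJ W$ is \emph{normalized}, i.e. $H(\BW)=1$. Then $W$ satisfies the hypotheses of Theorem \ref{3:EstimDiscrete}.

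Next I apply that theorem. Since in this section $\D_d=\D$ and $\BH_{\a V}=-\D-\a V$, the estimate \eqref{3:discr.1} reads
\[
N_-(\BH_{\a V})\le C_{5.1}\,\a^q\sup_{t>0}\left(t^q\sum_{V(x)>tW(x)}W(x)^{\frac{d}2}\right).
\]
It then remains to evaluate the right-hand side for the chosen $W$. Here the constraint $V(x)>tW(x)$ is equivalent to $(|x|^2+1)V(x)>ct$, while $W(x)^{\frac{d}2}=c^{\frac{d}2}(|x|^2+1)^{-\frac{d}2}$. Substituting $s=ct$ turns the supremum into
\[
c^{\frac{d}2-q}\sup_{s>0}\left(s^q\sum_{(|x|^2+1)V(x)>s}(|x|^2+1)^{-\frac{d}2}\right),
\]
and absorbing the factor $C_{5.1}\,c^{\frac{d}2-q}$ into a single constant $C(d,q)$ yields exactly \eqref{estim}.

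I anticipate no genuine obstacle: the work reduces to the normalization of $\BW$ and the change of variable $s=ct$. The one point that rewards care --- and the likely location of the misprint being corrected from \cite{RS} --- is the matching of exponents: the summand must carry the power $-\frac{d}2$ and the summation must be constrained by $(|x|^2+1)V(x)>s$ rather than by $V(x)>s$. Both emerge automatically once $W(x)=c(|x|^2+1)^{-1}$ is inserted, so no separate argument is needed.
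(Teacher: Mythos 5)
Your proposal is correct and is exactly the paper's route: the corollary is obtained by specializing Theorem \ref{3:EstimDiscrete} to the normalized Hardy weight $W(x)=c(|x|^2+1)^{-1}$ (justified via Propositions \ref{3:HardyCont} and \ref{3:HardyDisc}) and rewriting the weighted quasi-norm, with the normalizing constant absorbed into $C(d,q)$. Nothing is missing.
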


\subsection{An example.}\label{ex} Here we analyze an example which
shows that the estimate \eqref{estim} is order-sharp for certain
potentials $V$. Note that in the setting of Section \ref{2q<d}
such examples are unnecessary, since its results give two-sided
estimates of the quasi-norm $\|\BB_V\|_{\Sg_q}$ for $2q<d$.

What we give below, is an analogue of Example 4.1 in \cite{BS}.
However, in the discrete case we were  not able to establish the
asymptotic behavior of the eigenvalues $\l_j(\BB_V)$, and we only
give for them some lower estimate.

In our example the potential, for $|x|>1$, has the form
\begin{equation*}
    V(x)=|x|^{-2}(\log(|x|))^{-\frac{1}{q}}, \qquad 2q>d.
\end{equation*}
By Corollary \ref{partic}, for the corresponding Birman-Schwinger
operator $\BB_V$, the upper eigenvalue estimate holds,
\begin{equation*}
    n_+(s,\BB_V)\le C s^{-q}.
\end{equation*}
We will show that this estimate cannot be improved. More
specifically, we will prove that $\BB_V\not\in \Sg_q^\circ$. To
this end, we will construct an orthonormal in $\CH^1$ sequence of
functions $\varf_n$ such that
\begin{equation}\label{3:Ex.6a}
    \bb_V[\varf_n] \ge cn^{-\frac1q},\qquad c>0.
\end{equation}
Then the desired lower estimate will follow from the well known
results on compact operators in a Hilbert space. Namely, it
follows from Lemma II.4.1 in \cite{GK}, or Theorem 11.5.7 in
\cite{BSbook}, that
\[ \sum_{n=1}^m \l_n(\BB_V)\ge \sum_{n=1}^m\bb_V[ \varf_n],
\qquad \forall m\in\N.\]
 Since $\frac1q<\frac2d<1$, this inequality and \eqref{3:Ex.6a} imply
\[\sum_{n=1}^m \l_n(\BB_V)\ge cm^{1-\frac1q},\]
which is inconsistent with $\BB_V$ being in the class
$\Sg_q^\circ.$\vs

So, it remains to construct the functions $\varf_n$. We obtain
them by normalizing a certain sequence $\{u_n\}$. For constructing
the latter, we use the Fourier representation of functions on the
lattice. With a function $u\in\CH^1$ we associate its Fourier
transform,
\[\wh{u}(z)=(2\pi)^{-d/2}\sum_{x\in\Z^d}u(x)e^{-ixz},\qquad z\in
\Tor^d, \] where $\Tor^d$ stands for the $d$-dimensional torus.
Conversely,
\[u(x)=(2\pi)^{-d/2}\int_{\Tor^d}\wh{u}(z)e^{ixz}dz.\]
Then
\begin{equation}\label{3:Ex.6b}
    Q_0[u]=\int_{\Tor^d}|\wh u(z)|^2\om(z)dz,\qquad
 \om(z)=4\sum\limits_{j=1}^d\sin^2(z_j/2).
\end{equation}
We take $h_n=4^{-n}$ and set \begin{equation*}v_n(z)=\wh{u}_n(z)
=\chi\left(\frac{z_1}{h_n}-3\right)\prod_{j=2}^d\chi\left(\frac{z_j}{h_n}
\right),
\end{equation*}
where $\chi$ is the characteristic function of the interval
$(-1,1)$. So, the function $v_n(z)$ has support in the square with
the side length $2h_n$ and with the center moved by $3h_n$ in the
direction of $z_1$ axis. Obviously, the supports are disjoint, so
the functions $v_n$ are orthogonal in $L_{2,\om}$. The latter
notation stands for the $L_2$-space with the weight $\om$. Thus,
the functions $u_n$ are mutually orthogonal in $\CH^1$.\vs

The functions $u_n$ can be calculated explicitly:
\begin{equation}\label{expl}
    u_n(x)=(2\pi)^{-d/2}e^{3ih_nx_1}\prod\limits_{j=1}^d
    \frac{2\sin(h_nx_j)}{x_j}.
\end{equation}
We have $\|u_n\|^2_{\CH^1}=C\|v_n\|^2_{L_{2,\omega}}$. Since
$\omega(z)\asymp h_n^2$ on the support of $u_n$, this gives
$\|v_n\|^2_{L_{2,\omega}}\asymp h_n^2 \|v_n\|^2_{L_2}\asymp
h_n^{2+d}$.

Now, from \eqref{expl} we derive that
\begin{equation*}
  \bb_V[u_n]\ge c  \sum_{x\in \Z^d} V(x)\prod\limits_{j=1}^d
  \frac{4\sin^2(h_nx_j)}{|x_j|^2}.
\end{equation*}
We need  to estimate the last expression from below. To this end,
we restrict summation in the last sum to the region
$\Om_n\subset\Z^d$ where all $|h_nx_j|<\pi/2$ and $|x|>4$. In
$\Om_n$, we have $\sin^2(h_nx_j)\ge
\left(\frac{2}{\pi}h_nx_j\right)^2$. Therefore,
\begin{equation*}
\bb_V[u_n]\ge ch_n^{2d}\sum_{x\in \Om_n}V(x)=c
h_n^{2d}\sum_{4<|x|\le c h_n^{-1}}|x|^{-2}
   (\log|x|)^{-\frac1q}.
\end{equation*}

A lower bound is given by the integral
\begin{equation*}
   h_n^{2d} \int_4^{ch_n^{-1}}r^{-3+d}(\log r)^{-\frac1q}dr
   \asymp h_n^{-2+d}|\log h_n| ^{-\frac1q}.
\end{equation*}
Therefore,
\begin{equation*}
    \bb_V[u_n]/\|u_n\|_{\CH^1}^2\ge c |\log h_n|^{-\frac1q}\ge
     c n^{-\frac1q}.
\end{equation*}
 So, by normalizing the functions $u_n$ we obtain the sequence
 $\{\varf_n\}$ satisfying \eqref{3:Ex.6a}.\vs

It is interesting to notice that the test functions guaranteeing
the lower estimate for $2q<d$ were constructed as having disjoint
supports, while for $2q>d$ such functions have disjoint supports
of their Fourier transform.

\section{Sparse potentials}\label{sparse}
The results of Sections \ref{2q<d}, \ref{2q>d} allow one to
construct, for any prescribed value of $q\neq\frac{d}2$, the
potentials $V$ such that $\BB_V\in\Sg_q$, but
$\BB_V\notin\Sg_q^\circ$. For the borderline value $q=\frac{d}2$,
\thmref{RLC-d} leaves open the question of existence of such
potentials.

 In this section we consider a special class of potentials for which the
theory can be advanced much further. In particular, we answer the
above question by showing that for any number sequence
$p_j\searrow 0$ such that $p_{j+1}/p_j\to 1$, a potential $V$ does
exist, such that the sequence of eigenvalues $\l_j(\BB_{V})$
asymptotically behaves as $\{p_j\}$.

\subsection{Green function of the discrete
Laplacian.}\label{greendiscr} The operator $(-\D)^{-1}$ acts as a
discrete convolution, its kernel can be represented by the explicit
formula:
\[ h_y(x)=h_0(x-y),\qquad \forall y\in\Z^d,\]
 where
\begin{equation}\label{green}
    h_0(x)=(2\pi)^{-d}\int_{\Tor^d}\frac{e^{ixz}}
    {4\sum_{j=1}^d\sin^2(z_j/2)}dz.
\end{equation}
 Note that here the denominator coincides with the weight
function $\om$ in \eqref{3:Ex.6b}.

 We will call $h_0$ the Green function. One should be
careful when studying its properties, since the point $\l=0$ lies
in the spectrum of $-\D$. However, all the difficulties can be
easily overcome by systematic use of the representation
\eqref{green}. See, in particular, \cite{D}, where the case $d=3$
is analyzed. For any $d>3$ the reasoning is similar.\vs

The function $h_0(x)$ lies in $\CH^1(\Z^d)$, is harmonic outside
the point $x=0$, and its value at this point is
\[ h_0(0)=\mu^2=(2\pi)^{-d}\int_{\Tor^d}\frac{dz}
    {4\sum_{j=1}^d\sin^2(z_j/2)}.\]
Then also $h_y(y)=h_0(0)=\mu^2$ for any $y\in\Z^d$. Besides,
\[(\D h_y)(y)=-1,\qquad\forall y\in\Z^d.\]

Let $u$ be a function with finite support. Then summation by parts
leads to the equality
\begin{equation}\label{green2}
    (u,h_y)=-\sum_{x\in\Z^d}u(x)(\D h_y)(x)=u(y).
\end{equation}
Starting from \eqref{green2}, the scalar products, and also the
norms, are taken in $\CH^1$. Since finitely supported functions
are dense in $\CH^1$, the equality \eqref{green2} extends by
continuity to all $u\in\CH^1$. In particular,
\begin{equation}\label{almort}
    (h_{y},h_{y_1})=h_{y}(y_1)=h_0(y-y_1).
\end{equation}
Taking in \eqref{almort} $y_1=y$, we find that
\begin{equation*}
    \|h_y\|=\mu,\qquad \forall y\in\Z^d.
\end{equation*}

We have $0< h_0(x)\le \mu^2$ for all $x\in\Z^d$. It also follows
from the representation \eqref{green} that
\begin{equation}\label{decay}
    h_0(x)\le C_{6.4}|x|^{-(d-2)},\qquad x\neq0,
\end{equation}
with a constant depending only on $d$. The relations
\eqref{almort} and \eqref{decay} show that for the points
$y,y_1\in\Z^d$ lying far enough from each other, the functions
$h_y,h_{y_1}$ are `almost orthogonal'. It is convenient to
normalize these functions, so that further on we work with
\begin{equation*}
    \wt h_y=\mu^{-1}h_y.
\end{equation*}

\subsection{Sparse subsets in $\Z^d$.}\label{sparsub} Let $Y$ be a
subset in $\Z^d,\ d\ge3$, and let $\CH_Y^1$ stand for the subspace
in $\CH^1(\Z^d)$ spanned by the functions $h_y,\ y\in Y$. We say
that the set $Y$ is {\it sparse} (or, in more detail, {\it
strongly sparse}), if in $\CH^1(\Z^d)$ there exists a compact
operator $\BT$, such that the operator $\BI-\BT$ has bounded
inverse and the functions
\begin{equation}\label{spar}
    e_y=(\BI-\BT)^{-1}\wt h_y,\qquad y\in Y,
\end{equation}
form an orthonormal system in $\CH^1$.

We say that $Y$ is {\it weakly sparse}, if in the above definition
we replace the requirement of $\BT$ being compact by its
boundedness.\vs

We are going to describe (in Lemma \ref{sparlem} below) a rather
general way to construct sparse subsets. To this end, we need some
preliminary material.

Suppose that a sequence $\{\psi_j\},\ j\in\N,$ of elements of a
separable Hilbert space $\GH$ can be represented as
\[ \psi_j=(\BI-\BT)\varf_j,\qquad j\in\N,\]
where $\{\varf_j\}$ is an orthonormal basis in $\GH$ and $\BT$ is
a linear operator of the class $\GS_q$ with some $q,\
0<q\le\infty$. Then we say that $\{\psi_j\}$ is a $q$-basis. We
recall that $\GS_\infty$ stands for the space of all compact
linear operators in $\GH$, and $\GS_q,\ 0<q<\infty$, stands for
the Schatten ideal; see, e.g., \cite{BSbook,GK}.

The following  result is due to Prigorskii, see Theorem 5 in
\cite{Pr}.
\begin{prop}\label{pri}
Let $\{\psi_j\},\ j\in\N,$ be a complete and $\om$-linearly
independent sequence in $\GH$. It is a $q$-basis if and only if
the matrix
\[\begin{pmatrix}(\psi_j,\psi_k)_\GH-\d_{jk}\end{pmatrix}\]
belongs to the class $\GS_q$.
\end{prop}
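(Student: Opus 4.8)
The plan is to route everything through the \emph{synthesis operator} $\BS\colon\ell_2\to\GH$, $\BS(c_j)_j=\sum_j c_j\psi_j$, and the associated Gram operator $\BG=\BS^*\BS$, whose matrix in the standard basis $\{e_j\}$ of $\ell_2$ is precisely $\bigl((\psi_j,\psi_k)_\GH\bigr)$, so that the matrix hypothesis of the statement reads $\BG-\BI\in\GS_q$. The forward implication is then a direct computation: if $\psi_j=(\BI-\BT)\varf_j$ with $\{\varf_j\}$ orthonormal and $\BT\in\GS_q$, put $\BA=\BI-\BT$, so $(\psi_j,\psi_k)=(\BA\varf_j,\BA\varf_k)=(\BA^*\BA\,\varf_j,\varf_k)$. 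Hence the matrix in question is the matrix of $\BA^*\BA-\BI$ in the orthonormal basis $\{\varf_j\}$, and the expansion $\BA^*\BA-\BI=-\BT-\BT^*+\BT^*\BT$ lies in $\GS_q$ because $\GS_q$ is a two-sided ideal and $\BT^*\BT\in\GS_{q/2}\subset\GS_q$. Since membership in $\GS_q$ does not depend on the chosen orthonormal basis, the matrix is in $\GS_q$; the case $q=\infty$ is identical with ``compact'' in place of $\GS_q$.

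For the converse I would first record the analytic consequences of $\BG-\BI\in\GS_q$. Compactness of $\BG-\BI$ makes $\BG=\BI+(\BG-\BI)$ bounded and self-adjoint, so $\BS$ is bounded and $\sum_j c_j\psi_j$ converges for every $c\in\ell_2$; moreover $(\BG c,c)=\|\BS c\|^2\ge0$, so $\BG\ge0$. The key point is that $\BG$ is \emph{boundedly invertible}. Injectivity comes from $\om$-linear independence: $\BG c=0$ forces $\|\BS c\|^2=(\BG c,c)=0$, i.e. $\sum_j c_j\psi_j=0$, whence $c=0$. Since $\BG$ is a compact perturbation of $\BI$, its essential spectrum is $\{1\}$, so every point of $\sigma(\BG)\setminus\{1\}$, and in particular $0$, could only be an isolated eigenvalue; injectivity rules this out, and together with $\BG\ge0$ this yields $\BG\ge\delta\BI$ for some $\delta>0$.

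With invertibility in hand I would form $\BA=\BS\,\BG^{-1/2}\colon\ell_2\to\GH$. Then $\BA^*\BA=\BG^{-1/2}\BG\,\BG^{-1/2}=\BI$, so $\BA$ is an isometry; its range equals $\Rn\BS$, which is dense by completeness of $\{\psi_j\}$, and an isometry has closed range, so $\BA$ is unitary. Setting $\varf_j=\BA e_j$ gives an orthonormal basis of $\GH$, and from $\BS=\BA\,\BG^{1/2}$ we read off $\psi_j=\BS e_j=\BA\,\BG^{1/2}e_j=(\BI-\BT)\varf_j$ with $\BT=\BI-\BA\,\BG^{1/2}\BA^*=\BA(\BI-\BG^{1/2})\BA^*$. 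Because $\BA$ is unitary, $\BT\in\GS_q$ is equivalent to $\BG^{1/2}-\BI\in\GS_q$, which I would obtain from the commuting factorization
\[\BG^{1/2}-\BI=(\BG-\BI)(\BG^{1/2}+\BI)^{-1},\]
valid since both factors are functions of $\BG$: here $\BG^{1/2}+\BI\ge(1+\sqrt\delta)\BI$ is boundedly invertible and $\BG-\BI\in\GS_q$, so the ideal property of $\GS_q$ concludes the argument.

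I expect the main obstacle to be the backward direction, and within it the bounded invertibility of $\BG$, since that is exactly the step forcing both standing hypotheses into play: $\om$-linear independence supplies injectivity, completeness supplies the dense range needed to upgrade the isometry $\BA$ to a unitary, and the compactness built into $\GS_q$ pins the essential spectrum at $1$ so that injectivity already excludes $0$ from the spectrum. Once invertibility is secured, the passage to the square root is painless, as the commuting factorization converts a potential operator-Lipschitz estimate into a single use of the ideal property.
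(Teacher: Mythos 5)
Your proof is correct. The paper itself offers no proof of this proposition---it attributes the result to Prigorskii and remarks only that the general case follows the same scheme as Theorem VI.3.3 in Gohberg--Krein for $q=2$---and your argument (Gram operator $\BG=\BS^*\BS$, bounded invertibility from $\om$-linear independence plus the fact that a compact perturbation of $\BI$ has essential spectrum $\{1\}$, the factorization $\BS=\BA\BG^{1/2}$ with $\BA$ unitary by completeness, and the identity $\BG^{1/2}-\BI=(\BG-\BI)(\BG^{1/2}+\BI)^{-1}$ to transfer membership in $\GS_q$) is precisely that scheme carried out in full, valid for all $0<q\le\infty$.
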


Here the property of $\om$-linear independence of the system
$\{\psi_j\}$  means
that the assumptions
 \[\bsymb\y=\{\y_j\}\in \ell_2,\qquad \sum_j \y_j\psi_j
=0\] yield $\bsymb\y=0$, see definition in \cite{GK}, Section VI.2.4.

Note that for $q=2$ the statement of Proposition \ref{pri} turns
into that of Theorem VI.3.3 in \cite{GK}. The proof for the
general case follows the same scheme as in \cite{GK}.\vs

Now we introduce the quantities that appear in the formulation of
Lemma \ref{sparlem} below. Let $Y$ be a subset of $\Z^d$.
 Given a $y\in Y$, we denote
\[r_y=\dist(y,Y\setminus\{y\})\]
 and
 \[ [y]=\#\{x\in Y:|x|\le|y|\}.\]
 \begin{lem}\label{sparlem}
 Suppose $d\ge3$, and let $Y\subset\Z^d$ be a set, such that
  $[y]r_y^{-(d-2)}\to 0$ as
 $y\in Y,\ |y|\to\infty$.
 Suppose also that
\[\sum_{y\in Y}r_y^{-(d-2)}\le A,\qquad
\sup_{y\in Y}\left([y]r_y^{-(d-2)}\right)\le A, \qquad\forall y\in
Y,\]
 with a constant $A$ satisfying
\begin{equation}\label{sparlem.1}
    2AC_{6.4}<\mu^2.
\end{equation}
Then the set $Y$ is strongly sparse.
\end{lem}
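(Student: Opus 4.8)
The plan is to use Proposition~\ref{pri} (Prigorskii's criterion) applied to the normalized Green functions $\wt h_y,\ y\in Y$, regarded as a sequence $\{\psi_j\}$ in $\GH=\CH^1(\Z^d)$. By \eqref{almort} and the normalization $\|\wt h_y\|=1$, the Gram matrix entries are
\[
(\wt h_y,\wt h_{y_1})_{\CH^1}=\mu^{-2}h_0(y-y_1),
\]
so the diagonal entries equal $1$ and the off-diagonal entries are $\mu^{-2}h_0(y-y_1)$. Thus the matrix $M=\bigl((\wt h_y,\wt h_{y_1})-\d_{yy_1}\bigr)$ is the purely off-diagonal matrix with entries $\mu^{-2}h_0(y-y_1)$ for $y\neq y_1$. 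The goal is to show $M$ is a compact operator on $\ell_2(Y)$; then Prigorskii gives a representation $\wt h_y=(\BI-\BT)\varf_y$ with $\BT\in\GS_\infty$ compact and $\{\varf_y\}$ orthonormal, and setting $e_y=(\BI-\BT)^{-1}\wt h_y=\varf_y$ yields exactly the orthonormal system required by the definition of strong sparseness. (One must also check the hypotheses of Prigorskii, namely completeness in $\CH^1_Y$, which is automatic since $\CH^1_Y$ is \emph{defined} as the span, and $\om$-linear independence, which will follow from the same smallness estimates that give compactness, since $\|M\|<1$ forces the Gram operator $\BI+M$ to be invertible.)

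The heart of the argument is therefore the estimate on $M$. First I would bound the off-diagonal decay using \eqref{decay}: for $y\neq y_1$,
\[
|M_{yy_1}|=\mu^{-2}h_0(y-y_1)\le \mu^{-2}C_{6.4}\,|y-y_1|^{-(d-2)}.
\]
To control the operator norm (and compactness) of $M$ I would use the Schur test, estimating the maximal row sum $\sup_{y\in Y}\sum_{y_1\neq y}|M_{yy_1}|$. Here the two geometric hypotheses on $Y$ enter decisively. For a fixed $y$, split the sum over $y_1\in Y\setminus\{y\}$ according to whether $y_1$ is a ``near'' neighbor or a ``far'' one. The quantity $r_y=\dist(y,Y\setminus\{y\})$ guarantees every other point is at distance at least $r_y$, and the hypothesis $\sum_{y_1\in Y}r_{y_1}^{-(d-2)}\le A$ together with $\sup_y[y]r_y^{-(d-2)}\le A$ are exactly the summability conditions needed to bound the row sums by a constant multiple of $A$. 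A clean way is to note that for each $y$,
\[
\sum_{y_1\in Y\setminus\{y\}}|y-y_1|^{-(d-2)}\le C\,A,
\]
using that the nearest point contributes $\le r_y^{-(d-2)}$ and the full sum is controlled by $A$; combining with the Schur bound gives $\|M\|\le \mu^{-2}C_{6.4}\,C A$, and the smallness condition \eqref{sparlem.1}, $2AC_{6.4}<\mu^2$, makes this strictly less than $1$ (after absorbing the combinatorial constant $C$ into the stated constant, or by choosing the partition so that $C\le 2$). The strict bound $\|M\|<1$ secures invertibility of $\BI+M$ and the $\om$-linear independence.

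Compactness of $M$ (rather than mere boundedness) is what upgrades \emph{weakly sparse} to \emph{strongly sparse}, and this is where the decay hypothesis $[y]r_y^{-(d-2)}\to 0$ as $|y|\to\infty$ is used. I would approximate $M$ by finite-rank truncations $M_N$ obtained by restricting to the finitely many $y\in Y$ with $|y|\le N$, and show $\|M-M_N\|\to 0$ via the same Schur estimate applied to the tail: the row sums over points with large $|y|$ are controlled by $\sup_{|y|>N}[y]r_y^{-(d-2)}$, which tends to $0$ by hypothesis. The main obstacle I anticipate is precisely this Schur-test bookkeeping---organizing the double sum $\sum_{y}\sum_{y_1}|y-y_1|^{-(d-2)}$ so that the near/far splitting transparently produces the constant $A$ (and its vanishing tail), rather than a weaker bound. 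Everything else---the Gram matrix computation, the appeal to Prigorskii, and the identification $e_y=\varf_y$---is essentially formal once the operator $M$ is shown to be compact with $\|M\|<1$.
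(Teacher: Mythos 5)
Your proposal follows the paper's proof essentially verbatim: form the Gram matrix of the $\wt h_y$, bound its off-diagonal row sums via \eqref{almort} and \eqref{decay} and the Schur test to get $\|G-I\|\le 2AC_{6.4}\mu^{-2}<1$, obtain compactness from finite truncations using $[y]r_y^{-(d-2)}\to0$ and the tail of $\sum_y r_y^{-(d-2)}$, deduce $\om$-linear independence from invertibility of the Gram operator, and invoke Prigorskii's criterion with $q=\infty$. The one piece of bookkeeping you flagged but left open is resolved in the paper by splitting the row sum over $x\in Y\setminus\{y\}$ according to $|x|\le|y|$ versus $|x|>|y|$: the first group has at most $[y]$ points each at distance $\ge r_y$ from $y$, contributing $\le[y]r_y^{-(d-2)}\le A$, while in the second group $|x-y|\ge r_x$, so that sum is $\le\sum_{|x|>|y|}r_x^{-(d-2)}\le A$.
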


\begin{proof}
Consider the Gram matrix
\[  G=\begin{pmatrix}(\wt h_x,\wt h_y)\end{pmatrix}_{x,y\in Y}.\]
Its diagonal elements are equal to one, and by \eqref{almort}, for
every $y\in Y$ we have
\begin{eqnarray}
    \sum_{x\in Y,x\neq y}|(\wt h_x,\wt h_y)|\le&
    C_{6.4}\mu^{-2}\left(\sum\limits_{x\in Y\setminus\{y\},|x|\le|y|}
    +\sum\limits_{x\in Y,|x|>|y|}\right)|x-y|^{-(d-2)}\notag\\
    \le& C_{6.4}\mu^{-2}\left([y]r_y^{-(d-2)}+\sum\limits_{x\in Y,|x|>
|y|}r_x^{-(d-2)}\right).\label{sparlem.2}
\end{eqnarray}

Since the matrix $G$ is Hermitian, the same inequality holds if
the roles of $x$ and $y$ are interchanged. By the Shur test (see,
e.g., Theorem 2.5.6 in \cite{BSbook}), this implies that the
matrix $G-I$ defines a bounded operator in $\ell_2(Y)$ and,
moreover, \eqref{sparlem.1} yields $\|G-I\|<1$.

In order to prove that $G-I\in\GS_\infty$, we show that the
`truncated' matrices
\[G_N=\begin{pmatrix}(\wt h_x,\wt h_y)\end{pmatrix}_{x,y\in Y;\
|x|,|y|\le N}\] converge to $G$ in the operator norm. For
estimating $\|G-G_N\|$ we again use the Shur test. For the rows
with $|y|>N$ the estimate \eqref{sparlem.2} survives, and under
the assumptions of Lemma the sums on the right are small if $N$ is
large enough. If $|y|\le N$, then this sum should be replaced by
$\sum\limits_{x\in Y,|x|> N}r_x^{-(d-2)}$ which is also small. So,
$G-I\in\GS_\infty$.

Next, we check that the system $\{\wt h_{y}\}_{y\in Y}$ is
$\om$-linearly independent. Indeed, by taking the scalar products
with $\wt h_x,\ x\in Y,$ we come to the infinite system of linear
homogeneous equations, $G\bsymb\y=0$. Since $\|G-I\|<1$, the
operator $G$ is invertible in $\ell_2$, so that the only
$\ell_2$-solution of this system is trivial: $\bsymb\y=0$, and we
are done.

Now Proposition \ref{pri} applies, with $q=\infty$. So, we get an
operator $\BT_0\in\GS_\infty$ in the space $\CH^1_Y$, such that
the system $\{(\BI-\BT_0)\wt h_y\}_{y\in Y}$ is orthonormal.
Extending $\BT_0$ by zero to the orthogonal complement of
$\CH^1_Y$, we obtain the operator $\BT$ that satisfies all the
properties we need.
\end{proof}

We do not have a method that would allow us to construct subsets
that are weakly sparse but not sparse. Still, we consider the
notion of weak sparseness useful, since it gives us a way to see
the difference between implications of both types of sparseness.

\subsection{Sparse potentials.}\label{sparp}
We say that $V\ge0$ is a sparse (a weakly sparse) potential on
$\Z^d$, if its support
\[ Y_V :=\{x\in\Z^d: V(x)>0\}\]
is a sparse (respectively, weakly sparse) subset.\vs

Let $V$ be a weakly sparse potential. Then, using the equalities
\eqref{green2} and \eqref{spar}, we see that the quadratic form
$\bb_V[u]$ can be written as
\[ \bb_V[u]=\sum_{y\in Y_V}V(y)|(u,h_y)|^2=
\mu^2\sum_{y\in Y_V}V(y)|(u,(\BI-\BT)e_y)|^2.\]
 Along with the
operator $\BB_V$ generated by this quadratic form, consider also
the self-adjoint in $\CH^1(\Z^d)$ operator
\begin{equation}\label{oper}
    \BN_V=\mu\sum_{y\in Y_V} \sqrt{V(y)}(\cdot,e_y)e_y.
\end{equation}
We have
\[\BN_V(\BI-\BT^*)=\mu\sum_{y\in Y_V} \sqrt{V(y)}(\cdot,\wt h_y)e_y,\]
whence
\[ \|\BN_V(\BI-\BT^*)u\|^2=\mu^2\sum_{y\in Y_V}V(y)
|(u,\wt h_y)|^2=\bb_V[u].\]
 This means that
\begin{equation}\label{bv}
    \BB_V=(\BI-\BT)\BN_V^2(\BI-\BT^*).
\end{equation}

This representation allows one to obtain the following simple
result that is not covered by Proposition \ref{3:HardyDisc}.

\begin{thm}\label{hardy-sparse}
Let $V\ge 0$ be a function on $\Z^d,\ d\ge 3$, such that its
support $Y_V$ is weakly sparse. Then $V$ is a discrete Hardy
weight if and only if $V$ is bounded. Moreover, the following
two-sided inequality is satisfied for the Hardy constant $H(V)$:
\begin{equation}\label{harspconst}
    C\|V\|_{\ell_\infty}\le H(V)\le C'\|V\|_{\ell_\infty}.
\end{equation}
where $C=\|(\BI-\BT)^{-1}\|^{-2}\mu^2$ and
$C'=\|\BI-\BT\|^2\mu^2$.
\end{thm}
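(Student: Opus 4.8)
$V\ge 0$ with weakly sparse support $Y_V$ is a discrete Hardy weight iff $V$ is bounded, with the two-sided bound \eqref{harspconst}.

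The key tool is already in hand: the factorization \eqref{bv}, namely $\BB_V=(\BI-\BT)\BN_V^2(\BI-\BT^*)$, together with the observation that, since $Y_V$ is weakly sparse, the operator $\BI-\BT$ is bounded with bounded inverse. Recall also (from Subsection \ref{B-Sch}) that ``$V$ is a Hardy weight'' is exactly the statement that the quadratic form $\bb_V$ is bounded on $\CH^1$, i.e. that $\BB_V$ is a bounded operator, and that the Hardy constant $H(V)$ equals the operator norm $\|\BB_V\|$. So the whole theorem reduces to relating $\|\BB_V\|$ to $\|\BN_V^2\|=\|\BN_V\|^2$.

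The plan is as follows. First I would identify $\|\BN_V\|$ explicitly. Since $\{e_y\}_{y\in Y_V}$ is an orthonormal system, the operator $\BN_V=\mu\sum_{y}\sqrt{V(y)}\,(\cdot,e_y)e_y$ in \eqref{oper} is diagonal in this orthonormal basis of $\CH^1_{Y_V}$, with eigenvalues $\mu\sqrt{V(y)}$; hence $\|\BN_V\|=\mu\sup_{y\in Y_V}\sqrt{V(y)}=\mu\|V\|_{\ell_\infty}^{1/2}$ and $\|\BN_V^2\|=\mu^2\|V\|_{\ell_\infty}$. Second, I would read off the two-sided bound directly from \eqref{bv} by the submultiplicativity of the operator norm. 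For the upper bound,
\[
H(V)=\|\BB_V\|\le\|\BI-\BT\|\,\|\BN_V^2\|\,\|\BI-\BT^*\|
=\|\BI-\BT\|^2\mu^2\|V\|_{\ell_\infty},
\]
using $\|\BI-\BT^*\|=\|\BI-\BT\|$, which gives $C'=\|\BI-\BT\|^2\mu^2$. For the lower bound, invert \eqref{bv}: from $\BN_V^2=(\BI-\BT)^{-1}\BB_V(\BI-\BT^*)^{-1}$ one gets
\[
\mu^2\|V\|_{\ell_\infty}=\|\BN_V^2\|
\le\|(\BI-\BT)^{-1}\|^2\,\|\BB_V\|
=\|(\BI-\BT)^{-1}\|^2H(V),
\]
so $H(V)\ge\|(\BI-\BT)^{-1}\|^{-2}\mu^2\|V\|_{\ell_\infty}=C\|V\|_{\ell_\infty}$, matching the stated constant $C$.

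Finally, the logical equivalence drops out of these inequalities without extra work. If $V$ is bounded, the upper bound shows $\BB_V$ is bounded, so $V$ is a Hardy weight; if $V$ is a Hardy weight, then $\|\BB_V\|=H(V)<\infty$, and the lower bound forces $\|V\|_{\ell_\infty}<\infty$, i.e. $V$ is bounded. I do not expect a genuine obstacle here: the substance of the argument was already done in deriving \eqref{bv} and in establishing (via weak sparseness and Lemma \ref{sparlem}/the definition in Subsection \ref{sparsub}) that $\BI-\BT$ is invertible with bounded inverse. The only point to state carefully is the computation $\|\BN_V\|=\mu\|V\|_{\ell_\infty}^{1/2}$, which relies on the orthonormality of $\{e_y\}$ so that $\BN_V$ acts diagonally; this is where the sparseness hypothesis is truly used, and it is routine once that orthonormality is invoked.
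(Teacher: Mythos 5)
Your proposal is correct and follows essentially the same route as the paper: the paper's proof consists precisely of the factorization \eqref{bv} together with the identity $\|\BN_V\|=\mu\|V\|_{\ell_\infty}^{1/2}$, and you have simply written out the submultiplicativity/inversion steps that the paper leaves implicit. The constants you obtain match those stated in \eqref{harspconst}.
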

\begin{proof}
In the equivalent terms, we have to find the boundedness
conditions of the operator $\BB_V$. They immediately follow from
the representation \eqref{bv} and the equality
$\|\BN_V\|=\mu\|V\|_{\ell_\infty}^{1/2}.$
\end{proof}

The next result is also  a consequence of \eqref{bv}. Recall that
for a function $V$, such that $V(x)\to0$ as $|x|\to\infty$, we
write $V_j^*, j\in\N,$ for the numbers $|V(x)|$ rearranged in the
non-increasing order, see Section \ref{B-Sch}.
\begin{thm}\label{hardy-sparse1}
Let $V\ge 0$ be a function on $\Z^d,\ d\ge 3$, such that its
support $Y_V$ is weakly sparse. Then the operator $\BB_V$ is
compact if and only if $V(x)\to 0$ as $|x|\to \infty$. Moreover,
the following two-sided inequality is satisfied for the
eigenvalues $\l_j(\BB_V)$:
\begin{equation*}
      CV_j^*\le \l_j(\BB_V)\le C'V_j^*,\qquad\forall j\in\N,
    \end{equation*}
    with the same constants as in \eqref{harspconst}.
\end{thm}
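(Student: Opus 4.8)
The plan is to read everything off the factorization \eqref{bv}, namely $\BB_V=(\BI-\BT)\BN_V^2(\BI-\BT^*)$, which is already available since $Y_V$ is weakly sparse. First I would record the spectral structure of the middle factor. By \eqref{oper} the operator $\BN_V$ is diagonal in the orthonormal system $\{e_y\}_{y\in Y_V}$, with $\BN_V e_y=\mu\sqrt{V(y)}\,e_y$, and it vanishes on the orthogonal complement of $\CH^1_{Y_V}$. Hence $\BN_V^2$ is non-negative, self-adjoint, diagonal in the same system, and its non-zero eigenvalues are exactly the numbers $\mu^2 V(y)$, $y\in Y_V$. Rearranging the positive values $V(y)$ in non-increasing order reproduces the sequence $V_j^*$, so $\l_j(\BN_V^2)=\mu^2 V_j^*$ for all $j$; equivalently the singular values satisfy $s_j(\BN_V)=\mu\sqrt{V_j^*}$.

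For the compactness equivalence I would argue through $\BN_V^2$. A diagonal operator with eigenvalues $\mu^2 V(y)$ is compact precisely when these values have $0$ as their only accumulation point, that is, when $V(y)\to 0$ as $|y|\to\infty$ along $Y_V$; since $V$ vanishes off $Y_V$, this is the same as $V(x)\to0$ as $|x|\to\infty$. To transfer compactness to $\BB_V$ I use that the compact operators form a two-sided ideal: if $\BN_V^2$ is compact, then so is $(\BI-\BT)\BN_V^2(\BI-\BT^*)=\BB_V$. Conversely, because $Y_V$ is weakly sparse the operator $\BI-\BT$ has a bounded inverse, so $\BN_V^2=(\BI-\BT)^{-1}\BB_V(\BI-\BT^*)^{-1}$, and compactness of $\BB_V$ then forces compactness of $\BN_V^2$. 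This yields the stated equivalence.

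For the two-sided eigenvalue bound I would write $\BB_V=SS^*$ with $S=(\BI-\BT)\BN_V$, so that $\l_j(\BB_V)=s_j(S)^2$. The singular value inequality $s_j(AB)\le\|A\|\,s_j(B)$ gives $s_j(S)\le\|\BI-\BT\|\,s_j(\BN_V)$, while from $\BN_V=(\BI-\BT)^{-1}S$ the same inequality gives $s_j(\BN_V)\le\|(\BI-\BT)^{-1}\|\,s_j(S)$, i.e. $s_j(S)\ge\|(\BI-\BT)^{-1}\|^{-1}s_j(\BN_V)$. Squaring and inserting $s_j(\BN_V)^2=\mu^2 V_j^*$ produces
\[\|(\BI-\BT)^{-1}\|^{-2}\mu^2\,V_j^*\le\l_j(\BB_V)\le\|\BI-\BT\|^2\mu^2\,V_j^*,\]
which is exactly the claimed inequality, with $C$ and $C'$ as in \eqref{harspconst}.

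I expect no serious obstacle: the analytic content is already packaged in \eqref{bv} and in the weak-sparseness hypothesis, which supplies both the boundedness and the bounded invertibility of $\BI-\BT$. The only points requiring care are the correct matching of the diagonal eigenvalues of $\BN_V^2$ with the rearrangement $V_j^*$, and the use of bounded invertibility of $\BI-\BT$ in the converse compactness direction. These are precisely the places where weak sparseness, rather than mere boundedness of $\BB_V$, is essential.
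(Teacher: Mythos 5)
Your argument is correct and follows exactly the route the paper intends: the paper's proof of this theorem is simply the remark that it is ``the same as for Theorem~\ref{hardy-sparse}'', i.e.\ it reads everything off the factorization \eqref{bv} together with the diagonal structure of $\BN_V$ in the orthonormal system $\{e_y\}$. Your write-up merely makes explicit the details (the ideal property of compact operators, the bounded invertibility of $\BI-\BT$, and the singular value inequalities yielding the constants of \eqref{harspconst}) that the authors leave implicit.
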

The proof is the same as for \thmref{hardy-sparse}.

\vs

The next result is more advanced, and it requires the potential to
be sparse but not weakly sparse. We derive this result for
potentials $V$ which meet an additional condition: we assume that
the corresponding sequence $\{V_j^*\}$ is {\it moderately
varying}. We say that an (infinite) sequence $\{p_j\}$ of positive
numbers is moderately varying, if $p_j\searrow 0$ and
$p_{j+1}/p_j\to1$. We shall use a result of M.G. Krein, see
Theorem 5.11.3 in \cite{GK}. Below we reproduce its formulation,
restricting ourselves to the situation we need.
\begin{prop}\label{krein} Let $\BH\ge0$ and
$\BS$ be self-adjoint, compact operators. Suppose
$\rank\BH=\infty$ and the sequence of non-zero eigenvalues
$\l_j(\BH)$ is moderately varying. Then for the operator
$\BM=\BH(\BI+\BS)\BH$ one has
\begin{equation*}
    \lim_{j\to\infty}\frac{\l_j(\BM)}{\l_j^2(\BH)}=1.
\end{equation*}
\end{prop}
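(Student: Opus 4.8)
The plan is to regard $\BM=\BH(\BI+\BS)\BH=\BH^2+\BH\BS\BH$ as a perturbation of $\BH^2$, whose non-zero eigenvalues are precisely $\l_j^2(\BH)$, and to show that the perturbation $\BH\BS\BH$ is negligible at the level of individual eigenvalues. Since $\BS$ is compact, for every $\vare>0$ I would split $\BS=\BS_\vare+\BS_r$, where $\BS_r$ is self-adjoint of finite rank $r=r(\vare)$ and $\BS_\vare$ is self-adjoint with $\|\BS_\vare\|<\vare$. Correspondingly $\BM=A+F$ with $A=\BH^2+\BH\BS_\vare\BH$ and $F=\BH\BS_r\BH$, where $\rank F\le r$.

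For the small-norm part the key observation is operator monotonicity: from $-\vare\BI\le\BS_\vare\le\vare\BI$ one obtains, after conjugating by the self-adjoint operator $\BH$, the two-sided inequality $(1-\vare)\BH^2\le A\le(1+\vare)\BH^2$. By the min--max principle this yields $(1-\vare)\l_j^2(\BH)\le\l_j(A)\le(1+\vare)\l_j^2(\BH)$ for every $j$. For the finite-rank part I would invoke the Weyl inequalities, applied separately to the positive and negative parts of $F$ (each of rank $\le r$), to bound the index shift produced by a rank-$r$ perturbation, obtaining $\l_{j+r}(A)\le\l_j(\BM)\le\l_{j-r}(A)$. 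Combining the two gives
\[(1-\vare)\l_{j+r}^2(\BH)\le\l_j(\BM)\le(1+\vare)\l_{j-r}^2(\BH),\]
so that dividing by $\l_j^2(\BH)$ reduces the whole matter to controlling the ratios $\l_{j\mp r}(\BH)/\l_j(\BH)$.

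This last point is where the hypothesis of moderate variation is decisive, and I expect it to be the only genuinely delicate step: for a rapidly decaying eigenvalue sequence a fixed index shift by $r$ would change the value by a nontrivial factor, and the conclusion would fail. Writing $\l_{j-r}(\BH)/\l_j(\BH)$ as a product of $r$ consecutive ratios $\l_k(\BH)/\l_{k+1}(\BH)$, each tending to $1$ by assumption, and noting that $r$ is fixed once $\vare$ is fixed, I conclude that $\l_{j\mp r}(\BH)/\l_j(\BH)\to1$ as $j\to\infty$. Hence $1-\vare\le\liminf_j \l_j(\BM)/\l_j^2(\BH)$ and $\limsup_j \l_j(\BM)/\l_j^2(\BH)\le1+\vare$. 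Since $\vare>0$ is arbitrary, the limit equals $1$; the assumption $\rank\BH=\infty$ guarantees that the eigenvalue sequences are infinite, so that the limit is meaningful and the bounds above force infinitely many positive eigenvalues of $\BM$.
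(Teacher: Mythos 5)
Your argument is correct, but note that the paper does not prove this proposition at all: it is reproduced (in the restricted form needed) from Theorem 5.11.3 of \cite{GK} and attributed to M.~G.~Krein, so there is no internal proof to compare against; what you have written is a self-contained derivation of the cited result. Your route --- splitting the compact $\BS$ into a small-norm part $\BS_\vare$ and a finite-rank part $\BS_r$, absorbing the former via the operator inequality $(1-\vare)\BH^2\le \BH^2+\BH\BS_\vare\BH\le(1+\vare)\BH^2$ (legitimate because conjugation by the self-adjoint $\BH$ preserves operator order), controlling the latter by the Weyl index shift $\l_{j+r}(A)\le\l_j(\BM)\le\l_{j-r}(A)$ for a rank-$r$ self-adjoint perturbation, and finally using $\l_{j+1}(\BH)/\l_j(\BH)\to1$ to make the fixed shift $r$ harmless --- is exactly the classical mechanism behind Krein's theorem, and you correctly single out the moderate-variation hypothesis as the step without which the conclusion fails. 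Two points are worth making explicit if you write this up: (i) take $\vare<1$, so that $A\ge(1-\vare)\BH^2\ge0$ and the min--max comparison of positive eigenvalues applies; (ii) since $\BI+\BS$ is not assumed non-negative, $\BM$ need not be a non-negative operator, and it is precisely your lower bound $(1-\vare)\l_{j+r}^2(\BH)\le\l_j(\BM)$ combined with $\rank\BH=\infty$ that guarantees $\BM$ has infinitely many positive eigenvalues, so that $\l_j(\BM)$ is defined for all $j$ and the limit is meaningful --- you note this, correctly, at the end.
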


The next theorem is the main result of this section.
\begin{thm}\label{sppot}
Let $V\ge0$ be a sparse potential, such that the numbers $V_j^*$
form a moderately varying sequence. Then
\begin{equation*}
    \lim_{j\to\infty}\frac{\l_j(\BM)}{V_j^*}=1.
\end{equation*}
\end{thm}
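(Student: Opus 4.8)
The plan is to deduce the statement from M.~G. Krein's theorem recorded in Proposition~\ref{krein}, applied to the factorization \eqref{bv} of $\BB_V$. By \eqref{bv} we have $\BB_V=(\BI-\BT)\BN_V^2(\BI-\BT^*)$, while by \eqref{oper} the operator $\BN_V$ is self-adjoint and non-negative, diagonal in the orthonormal system $\{e_y\}_{y\in Y_V}$ with eigenvalues $\mu\sqrt{V(y)}$. The crucial structural feature is that, since $V$ is sparse in the \emph{strong} sense, the operator $\BT$ is compact; it is precisely this compactness that will make Krein's theorem applicable.

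First I would put $A=(\BI-\BT)\BN_V$, so that $\BB_V=AA^*$. As $V_j^*\searrow0$ the operator $\BN_V$ is compact, hence so is $A$, and therefore $AA^*$ and $A^*A$ carry the same ordered sequence of non-zero eigenvalues; thus $\l_j(\BB_V)=\l_j(A^*A)$ for every $j$. The purpose of this reshuffling is that
\[ A^*A=\BN_V(\BI-\BT^*)(\BI-\BT)\BN_V=\BN_V(\BI+\BS)\BN_V,\qquad \BS:=-\BT-\BT^*+\BT^*\BT, \]
which is precisely the form $\BH(\BI+\BS)\BH$ occurring in Proposition~\ref{krein}. The operator $\BS$ is self-adjoint, and it is compact because $\BT$ is.

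It then remains to verify the hypotheses of Proposition~\ref{krein} with $\BH=\BN_V$. Non-negativity is immediate, and $\rank\BN_V=\infty$ because a moderately varying sequence $\{V_j^*\}$ is infinite with all terms positive, so $Y_V$ is infinite. Finally the non-zero eigenvalues of $\BN_V$ are $\mu\sqrt{V_j^*}$, and these inherit moderate variation from $\{V_j^*\}$: the decay to $0$ is clear, while $\sqrt{V_{j+1}^*}/\sqrt{V_j^*}=\sqrt{V_{j+1}^*/V_j^*}\to1$, and the constant factor $\mu$ changes nothing. Krein's theorem then gives $\l_j(A^*A)/\l_j^2(\BN_V)\to1$, i.e. $\l_j(\BB_V)/(\mu^2 V_j^*)\to1$, which is the asserted asymptotics (the constant $\mu^2$ being merely the normalization $\l_j^2(\BN_V)=\mu^2V_j^*$).

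I do not expect a serious obstacle here: the argument is essentially a bookkeeping reduction to Proposition~\ref{krein}. The one genuinely load-bearing point is that \emph{strong} sparseness, rather than mere weak sparseness, must be assumed, since the compactness of $\BS$---equivalently of $\BT$---is indispensable for Krein's theorem; were $\BT$ only bounded, $\BS$ would fail to be compact and the conclusion could break down. A secondary point to treat carefully is the passage from $AA^*$ to $A^*A$, where one must invoke equality of the full ordered eigenvalue sequences (not just of the non-zero spectra) for the compact operator $A$.
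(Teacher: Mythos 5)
Your proof is correct and follows essentially the same route as the paper: you pass from $\BB_V=(\BI-\BT)\BN_V^2(\BI-\BT^*)$ to $\BM_V=\BN_V(\BI+\BS)\BN_V$ via the coincidence of the non-zero spectra of $AA^*$ and $A^*A$, and then apply Proposition~\ref{krein} with $\BH=\BN_V$, merely spelling out the hypothesis checks (compactness of $\BS$ from \emph{strong} sparseness, moderate variation of $\mu\sqrt{V_j^*}$, infinite rank) that the paper declares ``evidently satisfied.'' One remark: what both arguments actually yield is $\l_j(\BB_V)/(\mu^2 V_j^*)\to1$, so the factor $\mu^2$ you absorb as ``normalization'' is really a discrepancy with the literal statement of the theorem --- a slip in the paper's formulation rather than in your argument.
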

\begin{proof}
  The non-zero spectrum of the operator \eqref{bv} coincides
with that of the operator
\begin{equation}\label{kr}
    \BM_V:=\BN_V(\BI-\BT^*)(\BI-\BT)\BN_V=\BN_V(\BI+\BS)\BN_V,
\end{equation}
where
\begin{equation*}
    \BS=-\BT-\BT^*+\BT^*\BT.
\end{equation*}

Now we apply Proposition \ref{krein} to the operators $\BH=\BN_V$
and $\BM_V$, given by \eqref{oper} and \eqref{kr} respectively.
All the assumptions of proposition are evidently satisfied, and we
get the desired result.
\end{proof}

\begin{cor}\label{ccor}
For any moderately varying sequence $\{p_j\}$ there exists a sparse
potential $V\ge 0$, such that $\l_j(\BB_V)\sim p_j$.
\end{cor}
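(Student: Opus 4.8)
The plan is to combine Theorem~\ref{sppot} with the sparse-set construction of Lemma~\ref{sparlem}, reducing the corollary to a purely geometric problem: given the target sequence $\{p_j\}$, I must exhibit a set $Y\subset\Z^d$ satisfying the hypotheses of Lemma~\ref{sparlem} and then place potential values on $Y$ whose non-increasing rearrangement matches $\{p_j\}$. Concretely, I would choose points $y_1,y_2,\ldots$ in $\Z^d$ going to infinity, and define $V(y_j)=p_j$ with $V=0$ elsewhere. By construction the rearranged sequence $V^*_j$ equals $p_j$, which is moderately varying by assumption, so once I verify that $Y=\{y_j\}$ is strongly sparse, Theorem~\ref{sppot} gives $\l_j(\BB_V)\sim V^*_j=p_j$ directly.

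The heart of the argument is therefore the construction of $Y$. I would place the points on a sequence of concentric spherical shells of rapidly increasing radii, putting a controlled number of lattice points on each shell and spacing successive shells far enough apart that the separation $r_y=\dist(y,Y\setminus\{y\})$ grows quickly with $|y|$. The quantities to control are $[y]=\#\{x\in Y:|x|\le|y|\}$ and the two sums in Lemma~\ref{sparlem}. First I would arrange the radii $R_k$ to increase so fast (e.g.\ geometrically or faster, $R_{k+1}\ge 2R_k$ with an extra margin) that $r_y\gtrsim|y|$, whence $r_y^{-(d-2)}\lesssim|y|^{-(d-2)}$; by keeping the count of points per shell bounded (or growing slowly), both the total sum $\sum_y r_y^{-(d-2)}$ and the supremum $\sup_y[y]r_y^{-(d-2)}$ can be made as small as I like. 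In particular I can force the governing constant $A$ to satisfy the smallness condition \eqref{sparlem.1}, namely $2AC_{6.4}<\mu^2$, and simultaneously ensure $[y]r_y^{-(d-2)}\to0$ as $|y|\to\infty$. With these estimates Lemma~\ref{sparlem} certifies that $Y$ is strongly sparse.

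The main obstacle I anticipate is the bookkeeping in the smallness condition \eqref{sparlem.1}: the constant $A$ must be pushed below $\mu^2/(2C_{6.4})$, and this is a genuine quantitative constraint, not just an asymptotic one. The point is that $\mu^2$ and $C_{6.4}$ are fixed (depending only on $d$), so I cannot rescale them away; I must instead make the set sufficiently lacunary from the very first point. I would handle this by a greedy inductive placement: having chosen $y_1,\ldots,y_{k-1}$, I place $y_k$ at a radius large enough that the incremental contribution $r_{y_k}^{-(d-2)}$ to the running total sum is bounded by a rapidly decreasing term (say a geometric series with small first term), so that the full sum stays under the threshold. Because Lemma~\ref{sparlem} requires only $\sum_{y\in Y}r_y^{-(d-2)}\le A$ and the supremum bound, and these are monotone in the spacing, choosing the shells sparse enough makes the whole construction go through. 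The moderate-variation hypothesis on $\{p_j\}$ plays no role in the geometry; it enters only through Theorem~\ref{sppot}, and since $V^*_j=p_j$ by fiat, that hypothesis is satisfied automatically.
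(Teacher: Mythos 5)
Your proposal is correct and follows exactly the route the paper intends: the corollary is stated as an immediate consequence of Theorem~\ref{sppot}, with the required strongly sparse support supplied by Lemma~\ref{sparlem} via a sufficiently lacunary point set, and the values $V(y_j)=p_j$ giving $V_j^*=p_j$ by monotonicity. Your extra care with the quantitative threshold $2AC_{6.4}<\mu^2$ is exactly the detail the paper leaves implicit, and your handling of it is sound.
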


Taking here $p_j=j^{-q}$ with an arbitrary $q>0$, we obtain a
potential $V$ such that the eigenvalues $\l_j(\BB_V)$ behave as
$p_j$ and, therefore,
\[ N_-(\BH_{\a V})=O(\a^q);\qquad N_-(\BH_{\a V})\neq o(\a^q).\]
For $q=\frac{d}2$, this solves the problem stated in the end of
Introduction. This corollary allows one also to construct
potentials $V$, such that the eigenvalues $\l_j(\BB_V)$ decay
arbitrarily slowly, say, $\l_j(\BB_V)\sim (\log j)^{-1}$. This
corresponds to the exponential growth of the function $N_-(\BH_{\a
V})$. Applying \thmref{hardy-sparse1}, we easily obtain also
potentials with the logarithmic growth of this function.

\section{Discussion and concluding remarks}\label{concl}
\subsection{}\label{concl0} Let us briefly summarize our main
results.

1. Each estimate of the type
\begin{equation}\label{7:0}
    N_-(\BH_{\a V})=O(\a^q)
\end{equation}
with  $q\ge \frac{d}2$, known for the continuous Hamiltonian
\eqref{0:1}, has an analogue for its discrete counterpart. What
is more, such  analogue can be derived directly from its
prototype.

2. For $q\ge\frac{d}2$ there are also estimates \eqref{7:0} of a
different origin, that hold for the discrete Hamiltonian
\eqref{0:1} but do not have continuous prototypes. These are
estimates for the sparse potentials. Unlike the results mentioned
above, these ones give also lower estimates of the same order.

3. Estimates of the type \eqref{7:0} with $q<\frac{d}2$, that hold
for the discrete Hamiltonian, have no continuous analogues either.
They give necessary and sufficient condition on the potential, in
order that $\BB_V\in\Sg_q$ with a given $q$ to be satisfied.

\subsection{}\label{concl1a} Here we discuss the case of
non-sign-definite potentials $V\to 0$. As usual, we denote
\[ V_\pm=\frac12(|V|\pm V).\]
For the operator $\BH_{\a V}$ with such potential, the eigenvalues
may appear also to the right of the point
$2d=\max\s_{\ess}(\BH_{\a V})$, and along with $N_-(\BH_{\a V})$
one should consider the function
 $N_+(2d;\BH_{\a V})$ defined as the number of  such eigenvalues,
 counted with multiplicities.\vs

 Consider the mapping
 \[ \G:u(x)\mapsto (-1)^{x_1+\ldots+ x_d}u(x).\]
 It defines a unitary operator  in $\ell_2(\Z^d)$, and it is easy to see that
 \begin{equation*}
    \G(-\D-\a V)=(\D+2d\BI-\a V)\G,
\end{equation*}
therefore,
\begin{equation}\label{7:x}
    N_+(2d;\BH_{\a V})=N_-(\BH_{-\a V}).
\end{equation}

The inequalities
\begin{equation}\label{nonsign}
    N_-(\BH_{\a V})\le N_-(\BH_{\a V_+}),\qquad
N_+(2d;\BH_{\a V})\le N_-(\BH_{\a V_-})
\end{equation}
reduce the problem of estimating the functions on the left to the
similar problem for the non-negative $V$. The first
inequality in \eqref{nonsign} follows directly from the variational principle. To
derive the second, one uses also the equality \eqref{7:x}.

 Sometimes,  lower estimates of the same
 type are also possible. In particular, this is the case for the
 results in Section \ref{2q<d}.

\subsection{} Up to now, the problem of counting the
number of negative eigenvalues of the multidimensional discrete
\sh operator did not attract much attention of the specialists.
The only paper we are aware of, is \cite{kar}, where the operator
(1.1) with $\a=1$ was considered, under the assumptions that can
be formulated as follows:
 \[V=V_1-V_2,\qquad V_1,\ V_2\ge0,\] and,
moreover,
\[V_2\ge\nu_0>0;\qquad V_1\in\ell_r,\ r>1.\]
The main result consists in the estimate
\begin{equation}\label{kar}
    N_-(\BH_V)\le(e/\nu_0)^r\sum_{x\in\Z^d}V_1(x)^r
\end{equation}
that holds for all $d\ge1$, rather than only for $d\ge3$. In the
latter case, the estimate \eqref{kar} is much weaker than any of
our estimates.

Below we analyze the estimate \eqref{kar}. The conditions on $V$
imply, for each $n\in\Z^d$, that $V(n)\le V_1(n)-\nu_0$. Hence,
$V_+(n)>0$ yields $V_1(n)>\nu_0$. The number $\CN$ of such indexes
$n$ is always finite, and moreover,
\[ \CN\le\nu_0^{-r}\sum_{x\in\Z^d}V_1(x)^r.\]
Using now \eqref{nonsign} and the trivial inequality \eqref{0:5},
we come to an estimate similar to \eqref{kar}, but without the
redundant factor $e$.

In other words, the estimate \eqref{kar} needs nothing for its
validity, except for the non-negativity of the operator $-\D$. In
particular, this explains why it holds in any dimension.\vs
\subsection{}\label{concl1}
A problem, closely related to the one of counting eigenvalues,
concerns the Lieb -- Thirring type inequalities. This problem was
considered in \cite{HS}. Let $E_j^-(\BH_V), \
E_j^+(\BH_V)$ stand for the eigenvalues of the operator $\BH_V$,
lying on $(-\infty,0)$ and on $(2d,\infty)$ respectively. The
following estimates were established in \cite{HS} for any $d\ge1$,
along with other interesting results: if $V\in\ell_q(\Z^d)$ for
some $q\ge1$, then
  \[  \sum_j|E_j^-(\BH_V)|^q+\sum_j(E_j^+(\BH_V)-2d)^q
    \le\sum_{x\in\Z^d}|V(x)|^q;\]
  if $V\in\ell_{q+\frac{d}2}(\Z^d)$, again for some $q\ge1$, then
\[\sum_j|E_j^-(\BH_V)|^q+\sum_j(E_j^+(\BH_V)-2d)^q
    \le C(d,q)\sum_{x\in\Z^d}|V(x)|^{q+\frac{d}2},\]
with some, explicitly given constant factor.

In this respect we note that from our results it easily follows
that in dimensions $d\ge3$ a similar inequality holds for any
$q>0$, but with an additional and unspecified factor $C'(d,q)$ on
the right. Moreover, in the second estimate the assumption
$V\in\ell_{q+\frac{d}2}(\Z^d)$ can be replaced by
$V\in\ell_{q+r}(\Z^d)$ with an arbitrary $r\in(0,\frac{d}2]$, and
with the corresponding change in the right-hand side.

In this paper we do not look for the optimal constants in such
estimates. Note that traditionally just seeking the optimal
constants is the main issue when dealing with the Lieb -- Thirring
inequalities. So, in this respect our results do not give anything
new.\vs

\subsection{}\label{subs2} Our initial goal when starting the work
on this paper was to construct potentials on $\Z^d,\ d\ge3$, such
that $N_-(\BH_{\a V})=O(\a^\frac{d}2)$ but $N_-(\BH_{\a V})\neq
o(\a^\frac{d}2)$. In \cite{RS} this problem was mentioned as
unsolved. We solved   it here (in Section \ref{sparse}) by using the
sparse potentials.

Sparse potentials were already used in the spectral theory of the
 multidimensional Schr\"odinger operator in the papers \cite{MV,MV1}.
 The authors studied the continuous spectrum of such operators,
 including the scattering for the pair $(\BH_0,\BH_V)$.  Their
 definition of sparseness always requires $\#\left\{y\in
Y_V:|y|<r\right\}=o(r^d)$.
 Further requirements  depend on the problem
 considered. The property \eqref{almort} of
almost orthogonality, which is the basis for our analysis, does
not appear in their approach.

 Note that the use of sparse potentials in the theory of
 one-dimensional operators has a long history, starting from the seminal
 work \cite{Pear} by Pearson.

\subsection{}\label{d1} The spectrum (including its discrete component) of the
Hamiltonian of the form \eqref{0:1} in dimension one is well
studied, mostly due to the possibility to use the theory of Jacobi
matrices. The first results on counting the number of eigenvalues
outside the interval $[0,2]$ are due to Geronimo \cite{Ger2}, \cite{Ger}, see
also \cite{HS}, where a minor error in \cite{Ger} was
corrected. Other results in \cite{HS} concern the Lieb --
Thirring type inequalities, similar to those for $d\ge 3$
described in Section \ref{concl1} but much more elaborated.

\subsection{}\label{d2} We do not touch upon the case $d=2$ in this paper.  Some facts can be derived from the corresponding
results for the continuous case, by means of the technique
developed in Section \ref{hardy1}. However, the results for $d=2$ are incomplete, as well
as in the continuous version of the theory, see the survey
\cite{RS}, and they leave many natural
questions unanswered. In particular, the approach based upon  sparse
potentials does not work for $d=2$, so that in this case we still
have no examples giving $N(\BH_{\a V})=O(\a)$ but $N(\BH_{\a
V})\neq o(\a)$.

\end{document}